\newcommand{\lleft}{\left}
\newcommand{\rrvert}{\vert}
\newcommand{\rright}{\right}
\newcommand{\rrVert}{\Vert}
\newcommand{\llvert}{\vert}
\newcommand{\llVert}{\Vert}
\newcommand{\nicefrac}[2]{{#1}/{#2}}
\newcommand{\R}{\mathbf{R}}
\newcommand{\lip}{\mathrm{L}_\sigma}
\renewcommand{\P}{\mathrm{P}}
\newcommand{\E}{\mathrm{E}}
\newcommand{\F}{\mathscr{F}}
\newcommand{\1}{\mathbf{1}}
\renewcommand{\d}{\mathrm{d}}
\newcommand{\e}{\mathrm{e}}
\renewcommand{\Re}{\operatorname{Re}}
\newcommand{\mod}{\operatorname{mod}}
\newtheorem{stat}{Statement}[section]
\newtheorem{proposition}[stat]{Proposition}
\newtheorem{corollary}[stat]{Corollary}
\newtheorem{theorem}[stat]{Theorem}
\newtheorem{lemma}[stat]{Lemma}
\begin{document}
\begin{frontmatter}

\title{Nonlinear noise excitation of intermittent stochastic PDEs and the topology of LCA groups\thanksref{T1}}
\runtitle{Nonlinear noise excitation}

\begin{aug}
\author[A]{\fnms{Davar}~\snm{Khoshnevisan}\ead[label=e1]{davar@math.utah.edu}}
\and
\author[A]{\fnms{Kunwoo}~\snm{Kim}\corref{}\ead[label=e2]{kkim@math.utah.edu}}
\runauthor{D. Khoshnevisan and K. Kim}
\affiliation{University of Utah}
\address[A]{Department of Mathematics\\
University of Utah\\
Salt Lake City, Utah 84112-0090\\
USA\\
\printead{e1}\\
\phantom{E-mail: }\printead*{e2}} 
\end{aug}
\thankstext{T1}{Supported in part by the NSF Grants DMS-10-06903 and
DMS-13-07470.}

\received{\smonth{3} \syear{2013}}
\revised{\smonth{2} \syear{2014}}

%
\begin{abstract}
Consider the stochastic heat equation
$\partial_t u = \mathscr{L}u + \lambda\sigma(u)\xi$, where
$\mathscr{L}$ denotes the generator of a L\'evy process
on a locally compact Hausdorff Abelian group $G$, $\sigma\dvtx \mathbf{R}\to\mathbf{R}$
is Lipschitz continuous, $\lambda\gg1$ is a large
parameter, and $\xi$ denotes space--time white noise
on $\mathbf{R}_+\times G$.

The main result of this paper contains a near-dichotomy for the
(expected squared) energy
$\mathrm{E}(\| u_t\| _{L^2(G)}^2)$ of the solution. Roughly speaking,
that dichotomy says that, in all known cases where $u$ is
intermittent, the energy of the solution behaves generically as
$\exp\{\operatorname{const}\cdot\,\lambda^2\}$ when $G$ is discrete 
and~$\ge\exp\{\operatorname{const}\cdot\,\lambda^4\}$ when $G$ is connected.
\end{abstract}

%
\begin{keyword}[class=AMS]
\kwd[Primary ]{60H15}
\kwd{60H25}
\kwd[; secondary ]{35R60}
\kwd{60K37}
\kwd{60J30}
\kwd{60B15}
\end{keyword}
\begin{keyword}
\kwd{Stochastic heat equation}
\kwd{intermittency}
\kwd{nonlinear noise excitation}
\kwd{L\'evy processes}
\kwd{locally compact Abelian groups}
\end{keyword}
\end{frontmatter}

\setcounter{footnote}{1}
\section{An informal introduction}\label{sec1}

Consider a stochastic heat equation of the form
{\renewcommand{\theequation}{SHE}
\begin{equation}\label{SHE}
\frac{\partial}{\partial t} u = \mathscr{L} u + \lambda\sigma(u) \xi.
\end{equation}}%
Here, $\sigma\dvtx \R\to\R$ is a Lipschitz continuous function,
$t>0$ denotes the time variable,
$x\in G$ is the space variable, for a locally compact Hausdorff Abelian group
$G$---such as $\R$, $\mathbf{Z}^d$, or
$[0,1]$---and the initial value $u_0\dvtx G\to\R$ is nonrandom and well behaved.
The operator $\mathscr{L}$ acts on the variable $x$
only, and denotes the generator of a L\'evy process on $G$,
and $\xi$ denotes space--time white noise on $(0,\infty)\times G$
whose control
measure is the restriction of the Haar measure on $\R\times G$ to
$(0,\infty)\times G$.
The number $\lambda$ is a positive parameter; this is the so-called
\emph{level of the noise}.

\setcounter{equation}{0}

In this paper, we study the ``noisy case.'' That is when $\lambda$ is
a large quantity. The case that $\lambda$ is small is also interesting;
see, for example, the deep theory of Freidlin and Wentzel \cite{Freidlin}.

We will consider only examples of (\ref{SHE}) that are \emph{intermittent}.
Intuitively speaking, ``intermittency'' is the property that the
solution $u_t(x)$
develops extreme oscillations at some values of $x$, typically when $t$
is large. Intermittency
was announced first (1949) by Batchelor and Townsend in a WHO
conference in Vienna~\cite{BatchelorTownsend}, and slightly later by Emmons~\cite{Emmons}
in the context of
boundary-layer turbulence. Ever since that time, intermittency
has been observed in an enormous number of scientific disciplines. Shortly,
we will point to concrete instances in theoretical physics. In the mean time,
let us also mention that, in neuroscience, intermittency is observed as
``spikes''
in neural activity. (Tuckwell \cite{Tuckwell} contains
a gentle introduction to SPDEs in neuroscience.) And in finance, intermittency
is usually associated with financial ``shocks.''

The standard mathematical definition of intermittency
(see Molchanov \cite{Molch91} and Zeldovich et al. \cite{ZRS})
is that
%
%
\begin{equation}
\frac{\gamma(k)}{k} < \frac{\gamma(k')}{k'}\qquad\mbox{whenever }2\le
k<k'<\infty,
\end{equation}
where $\gamma$ denotes any reasonable choice of a so-called
Lyapunov exponent of the moments of the energy of the solution:
we may use either
\[
\gamma(k):= \limsup_{t\to\infty} t^{-1}\log\E \bigl(
\llVert u_t\rrVert_{L^2(G)}^k \bigr)\quad\mbox{or}
\quad\gamma(k):=\liminf_{t\to\infty} t^{-1}\log\E \bigl(
\llVert u_t\rrVert_{L^2(G)}^k \bigr).
\]
Other essentially-equivalent choices are also possible.
One can justify this definition either by making informal analogies
with finite-dimensional nonrandom dynamical systems \cite{Mandelbrot},
or by making a somewhat informal appeal to the
Borel--Cantelli lemma \cite{BC}.
Gibbon and Titi \cite{GibbonTiti} contains an exciting modern account
of mathematical intermittency and its role in our present-day understanding
of physical intermittency.

In the case that $G=\R$, $G=[0,1]$ or $G=\mathbf{Z}^d$,
there is a huge literature that is devoted to the intermittency
properties of
(\ref{SHE}) when $\sigma(x)=\operatorname{const}\cdot\, x$; this particular
model---the so-called
\emph{parabolic Anderson model}---is interesting in its own right, as
it is
connected deeply with a large number of diverse
questions in probability theory and mathematical physics.
See, for example, the ample bibliographies
of \cite
{BC,CM94,Corwin,CM,DoeringSvavov,denHollander,FoondunKh,GartnerKonig,Kardar,KPZ,Molch91,ZRS}.

The parabolic Anderson model arises in a surprisingly large number of
diverse scientific problems; see Carmona and Molchanov \cite{CM94},
Introduction.
We mention quickly a few such instances:
if $\sigma(0)=0$, $u_0(x)>0$ for all $x\in G$, and $G$ is either $\R$
or $[0,1]$ then
Mueller's comparison principle \cite{Mueller} shows that $u_t(x)> 0$
almost surely for all $t>0$ and $x\in G$; see also \cite{Minicourse}, page~130.
In that case, $h_t(x):=\log u_t(x)$ is well defined
and is the so-called Cole--Hopf solution to the KPZ equation
of statistical mechanics \cite{Kardar,KPZ}. The parabolic
Anderson model has many connections also with the stochastic Burger's equation
\cite{CM94} and Majda's model of shear-layer flow in turbulent diffusion
\cite{Majda}.

Foondun and Khoshnevisan \cite{FoondunKh} have shown that the solution
to (\ref{SHE}) is
fairly generically intermittent even when $\sigma$ is nonlinear, as
long as
$\sigma$ behaves as a line in one form or another.

It was noticed early on, in NMR spectroscopy,
that intermittency can be associated strongly to nonlinear noise excitation.
See, for example, Bl\"umich \cite{Blumich}; Lindner et al. \cite{LindnerEtAl}
contains a survey of many related ideas in the physics literature.
In the present context, this informal observation
is equivalent to the existence of
a nonlinear relationship between the energy $\llVert u_t\rrVert
_{L^2(G)}$ of
the solution at time $t$ and the level $\lambda$ of the noise.
A precise form of such a relationship will follow as a ready
consequence of
our present work in all cases where the solution is known (and/or expected)
to be intermittent. In fact, the main findings of this paper will
imply that typically, when the solution
is intermittent, there is a near-dichotomy:
\begin{itemize}
\item On one hand, if $G$ is discrete then the energy of the solution
behaves roughly as $\exp\{\operatorname{const}\cdot\,\lambda^2\}$;
\item on the other hand, if $G$ has a connected locally compact Hausdorff
Abelian subgroup, then
the said energy behaves at least as badly as $\exp\{\operatorname
{const}\cdot\,\lambda^4\}$.
\end{itemize}
And quite remarkably, these properties do not depend
in an essential way on the operator $\mathscr{L}$; they depend
only on the connectivity properties of the underlying state space $G$.

Every standard numerical method for solving (\ref{SHE}) that is known to us
begins by
first discretizing $G$ and $\mathscr{L}$. Our results suggest that
when $\lambda$
is modestly large, then
nearly all such methods will generically underestimate by a vast margin
when we use them to predict the size
of the biggest intermittency islands (or shocks, or spikes)
of the solution to (\ref{SHE}).

Other SPDE models are analyzed in a companion paper
\cite{KhKim} which should ideally be read before the present paper.
That paper is less abstract than this one and, as such,
has fewer mathematical prerequisites.
We present in that paper the
surprising result that the stochastic heat equation on an interval is
typically significantly more noise excitable than the stochastic wave equation
on the real line.

%
%
\begin{remark}
The referees of the paper have unanimously suggested that
we describe, in words, an intuitive explanation for
this near dichotomy. We agree that such an exposition will add value to
the presentation of
the paper, and would like to say a few things in this direction.
Therefore, let us briefly consider the case that $G$ is a very nice LCA group
(such as a finite group, $\mathbf{Z}^d$, or $\R$)
and $\sigma(u)=c u$ for some constant $c>0$ (the parabolic Anderson model).
First, one can see that when $G$ is finite, (\ref{SHE}) is
another way to write a finite-dimensional stochastic differential equation;
see Examples~\ref{ex1} and~\ref{ex2}. In this
case, it is not hard to verify directly, using only SDE technology,
that the
energy of the solution to (\ref{SHE}) typically grows as $\exp\{
\operatorname{const}\cdot\,\lambda^2\}$
as $\lambda\to\infty$.\footnote{For an example, the reader is
encouraged to consider
the exponential martingale of Brownian motion. In that case, the $\exp
\{\operatorname{const}
\cdot\,\lambda^2\}$ behavior of the solution is more or less immediate.}
In some sense, $G=\mathbf{Z}^d$ can be thought of as a limit
of the finite case: since most of the mass of the solution $u_t$
is concentrated on compacts [because $u_t\in L^2(G)$], this suggests that
the case that $G=\mathbf{Z}^d$ should behave as does the finite case.
And it does.
On the other hand, when $\mathscr{L}$ is the generator of a nice L\'evy
process---say an isotropic $\alpha$-stable process---on $G=\R$, then
$\alpha$ is necessarily in $(1,2]$ (see Dalang \cite{Dalang}),
and a simple scaling argument shows that the large-$\lambda$
behavior of the solution to (\ref{SHE}) is the same as the large-time behavior
of the solution to (\ref{SHE}) with $\lambda=1$, provided that we rescale
time as $T:= \lambda^{2\alpha/(\alpha-1)}t$. The existing
literature on the parabolic Anderson model suggests that the energy
at large time $T$ of the solution to (\ref{SHE}) with $\lambda=1$ should behave
as $\exp\{\operatorname{const}\cdot\, T\}$. Set $T=\lambda^{2\alpha
/(\alpha-1)}t$
in order to see that the energy to (\ref{SHE}) with variable $\lambda\gg1$
ought to behave as $\exp\{\operatorname{const}\cdot\,\lambda^{2\alpha
/(\alpha-1)}t\}$
as $\lambda\to\infty$, for all $t>0$ fixed. In other words, when
$\sigma(u)=cu$
and the underlying L\'evy process is isotropic stable, the energy
behaves as
$\exp\{\operatorname{const}\cdot\,\lambda^q\}$ as $\lambda\to\infty$
for $q=2\alpha/(\alpha-1)\ge4$, where the time variable $t$ is fixed.
\end{remark}

\section{Main results}
The main goal of this article is to describe the
behavior of~(\ref{SHE}) for a locally compact Hausdorff Abelian group $G$,
where the initial value $u_0$ is nonrandom and is in the group algebra
$L^2(G)$.\footnote{This is the usual space of all measurable functions
$f\dvtx G\to\R$ that are square integrable with respect to the Haar measure
on $G$.} Compelling, as well as easy to understand, examples
can be found in Section~\ref{secexamples} below.

We assume throughout that the operator $\mathscr{L}$ acts on the space variable
only and denotes the generator of a L\'evy process $X:=\{X_t\}_{t\ge
0}$ on $G$
(see Section~\ref{secLCA} for analysis on LCA groups and Section~\ref{secLevy}
for L\'evy processes on LCA groups), %
$\sigma\dvtx \R\to\R$ is Lipschitz continuous and nonrandom
and $\xi$ denotes space--time white noise on $(0,\infty)\times G$.
That is,
$\xi$ is a generalized centered Gaussian process that is indexed by
$(0,\infty)\times G$ and whose covariance measure is described via
%
%
\begin{equation}
\operatorname{Cov} \biggl( \int\varphi\,\d\xi, \int\psi\,\d\xi \biggr) =\int
_0^\infty\d t\int_G
m_G(\d x) \varphi_t(x)\psi_t(x),
\end{equation}
for all $\varphi,\psi\in L^2(\d t\times\d m_G)$,
where $m_G$ denotes the Haar measure on $G$, and $\int\varphi\,\d\xi$
and $\int\psi\,\d\xi$ are defined as Wiener integrals. Finally,
$\lambda>0$ designates a fixed parameter that is generally referred
to as the \emph{level of the noise}.

One can adapt the method of Dalang \cite{Dalang} in order to show that,
in the linear case---that is, when
$\sigma\equiv\operatorname{constant}$---(\ref{SHE}) has a function solution
if
{\renewcommand{\theequation}{D}
\begin{equation}\label{D}
\int_{G^*} \biggl(\frac{1}{\beta+\Re\Psi(\chi)} \biggr) m_{G^*}(
\d\chi)<\infty\qquad\mbox{for one, hence all, }\beta>0,
\end{equation}}%
where $\Psi$ denotes the characteristic exponent of our L\'evy process
$\{X_t\}_{t\ge0}$
and $m_{G^*}$ denotes the Haar measure on the dual $G^*$ to our group $G$.
See also Brze{\'z}niak and Jan van Neerven \cite{BvN} and
Peszat and Zabczyk \cite{PZ}.
Because we want (\ref{SHE}) to have a function solution, at the very least
in the linear case, we have no choice but to assume Dalang's condition
(\ref{D}) from now on.
Henceforth, we assume (\ref{D}) without further mention.

\setcounter{equation}{1}

In some cases, condition (\ref{D}) always holds. For example, suppose $G$ is
discrete. Because $G^*$ is compact, thanks to Pontryagin--van
Kampen duality \cite{Morris,Rudin},
continuity of the function $\Psi$ implies its uniform boundedness, whence
we find that the \emph{Dalang condition} (\ref{D}) \emph{always holds when $G$ is
discrete}. This simple
observation is characteristic of many interesting results about the
heat equation (\ref{SHE})
in the sense that a purely topological property of the group $G$ governs
important aspects of (\ref{SHE}): in this case, we deduce the existence of a
solution generically
when $G$ is discrete. For a probabilistic proof of this particular
fact, see Lemma~\ref{lemexistG-discrete} below.

We wish to establish that ``noise excitation'' properties of (\ref{SHE})
are ``intrinsic to the group $G$.'' This\vspace*{1pt} goal forces us to try
and produce solutions that take values in the group algebra
$L^2(G)$. The following summarizes the resulting existence and
regularity theorem
that is good enough to help us begin our discussion of noise excitation.
We note that an exact definition of a mild solution will be
given in~(\ref{eqmildsoln}). That definition will imply
that our solution is in $L^2(G)$ at all times, and hence is a bona fide
function on $G$
at all times.

%
%
\begin{theorem}\label{thexistunique}
Suppose that $\sigma$ is Lipschitz continuous and, in addition, that either
$G$ is compact or $\sigma(0)=0$. Then for every nonrandom initial
value $u_0\in L^2(G)$
and $\lambda>0$, the stochastic heat equation (\ref{SHE})
has a mild solution $\{u_t\}_{t\ge0}$, with values in $L^2(G)$,
that satisfies the following:
there exist finite constants $c_1> 0$ and $c_2>0$ that yield the energy
inequality
%
%
\begin{equation}
\label{eqL2bdd} \E \bigl(\llVert u_t\rrVert_{L^2(G)}^2
\bigr) \le c_1{\e}^{c_2t} \qquad\mbox{for every }t\ge0.
\end{equation}
Moreover, if $v$ is an arbitrary mild solution that satisfies (\ref{eqL2bdd})
subject to $v_0=u_0$,
then $\P\{\llVert u_t-v_t\rrVert _{L^2(G)}=0\}=1$ for all $t\ge0$.
\end{theorem}

%
%
\begin{remark}
For more explicit bounds on the constants $c_1$ and $c_2$, see
the inequality (\ref{eqNu}) below. That inequality
describes carefully how $c_1$ and $c_2$ depend on the various
parameters of (\ref{SHE})---in particular it states abstractly how
$c_1$ and $c_2$ depend on $\lambda$---and
will be used several times in the sequel.
\end{remark}

The proof of Theorem~\ref{thexistunique} will be given in
Sections~\ref{secproofpart1} and~\ref{secproofpart2};
see also Section~\ref{secconvolutions}, in which we develop the requisite
machinery for Theorem~\ref{thexistunique} and the other main results
in this paper.
However, the preceding result is well known for many Euclidean examples;
see, in particular, Dalang and Mueller \cite{DalangMueller}.

Thus, we assume from now on, and without further mention, that
%
%
\begin{equation}
\mbox{either }G\mbox{ is compact, or }\sigma(0)=0,
\end{equation}
in order to know {a priori} that (\ref{SHE}) has an $L^2(G)$-valued
solution.\footnote{%
In other words, we do not need to assume that
$\sigma(0)=0$ when $G$ is compact.
However, we do need this condition in general when $G$ is noncompact.
There are examples of $\sigma$ such that $\sigma(0)\neq0$,
noncompact LCA groups $G$,
and L\'evy process generators $\mathscr{L}$ for which (\ref{SHE}) does
not have an $L^2(G)$-valued solution for all time.}

The principal aim of this paper is to study the energy of the solution
when $\lambda$ is large. In order to simplify the exposition, let us
denote the \emph{energy of the solution at time} $t$ by
%
%
\begin{equation}
\label{energy} \mathscr{E}_t(\lambda):= \sqrt{\E \bigl(
\llVert u_t\rrVert_{L^2(G)}^2 \bigr)}.
\end{equation}
To be more precise, $\mathscr{E}_t(\lambda)$ denotes the $L^2(\P
)$-norm of the
energy of the solution. But we refer to it as the energy in order to
save on
the typography.

We begin our analysis of noise excitation by first noting the following fact:
if~$\sigma$ is essentially bounded and $G$ is compact, then the
solution to (\ref{SHE}) is
\emph{at most linearly noise excitable}. The following is the precise
formulation
of this statement (see Section~\ref{secproofprop} for the proof).

%
%
\begin{proposition}[(Linear noise excitation)]\label{prlinear}
If $\sigma\in L^\infty(\R)$ and $G$ is compact, then
%
%
\begin{equation}
\limsup_{\lambda\uparrow\infty}\frac{\mathscr{E}_t(\lambda)
}{\lambda} <\infty\qquad\mbox{for all
}t>0.
\end{equation}
This bound can be reversed in the following sense:
if also $\inf_{x\in G}\llvert u_0(x)\rrvert>0$ and $\inf_{z\in\R
}\llvert\sigma
(z)\rrvert>0$, then
%
%
\begin{equation}
\liminf_{\lambda\uparrow\infty}\frac{\mathscr{E}_t(\lambda
)}{\lambda}>0 \qquad\mbox{for all }t>0.
\end{equation}
\end{proposition}

We do not know what happens, at this level of generality, when $\sigma
\in L^\infty(\R)$
and $G$ is noncompact.

The bulk of this paper is concerned with the behavior of (\ref{SHE}) when the
energy $\mathscr{E}_t(\lambda)$ behaves as $\exp(\operatorname{const}\cdot\,
\lambda^q)$, for a fixed positive constant $q$, as $\lambda\uparrow
\infty$.
With this in mind, let us define for all $t>0$,
%
%
\begin{eqnarray}
\underline{\mathfrak{e}}(t) &:=& \liminf_{\lambda\uparrow\infty}
\frac{\log\log\mathscr{E}_t(\lambda)}{\log\lambda},\qquad\overline {\mathfrak{e}}(t):= \limsup_{\lambda\uparrow\infty}
\frac{\log\log\mathscr{E}_t(\lambda)}{\log
\lambda}.
\end{eqnarray}
If $\underline{\mathfrak{e}}(t)>0$ for all $t>0$, then the solution
to (\ref{SHE})
is expected to be also ``intermittent,'' not only in the usual mathematical
sense \cite{CM94}, but also in a physical sense [i.e., in cases where
the solution to (\ref{SHE}) represents the density of a particle system].

%
%
\begin{definition}
We refer to $\overline{\mathfrak{e}}(t)$ and
$\underline{\mathfrak{e}}(t)$, respectively, as the
\emph{upper} and the \emph{lower excitation indices}
of $u$ at time $t$.
In many cases of interest, $\underline{\mathfrak{e}}(t)$ and
$\overline{\mathfrak{e}}(t)$ are equal and do not depend
on the time variable $t>0$ (N.B. \emph{not} to be confused with $t\ge0$).
In such cases, we tacitly write $\mathfrak{e}$ for that common value,
and we think of $\mathfrak{e}$ as the
\emph{index of nonlinear noise excitation} of the solution to (\ref{SHE}).
\end{definition}

Thus, Proposition~\ref{prlinear} implies that $\mathfrak{e}=0$
when $\sigma$ is
essentially bounded and $G$ is compact.

As a central part of our analysis, we will prove
that both of these indices are natural quantities, as they are ``group
invariants''
in a sense that will be made clear in Section~\ref{secgroup}.
Moreover, one can deduce from
our work that when $G$ is unimodular
(see Definition~\ref{defmodulus}) the law of the solution to (\ref{SHE}) is
itself a ``group invariant.'' A careful explanation of the quoted terms
will appear
later on in Theorem~\ref{thgroupinv}.
For now, we content ourselves by stating the main three results of this paper.

%
%
\begin{theorem}[(Discrete case)]\label{thdiscrete}
If $G$ is discrete, then $\overline{\mathfrak{e}}(t)\le2$ for all $t>0$.
In fact, $\mathfrak{e}= 2$,
provided additionally that
%
%
\begin{equation}
\label{eqLsigma} \ell_\sigma:= \inf_{z\in\R\setminus\{0\}} \bigl\llvert
\sigma(z)/z \bigr\rrvert>0.
\end{equation}
\end{theorem}

Recall that the nonlinearity $\sigma\dvtx \R\to\R$ is assumed to be
Lipschitz continuous,
and hence $\sup_{z\in\R\setminus\{0\}}\llvert\sigma(z)/z\rrvert<\infty$.
Thus, (\ref{eqLsigma}) is the assertion that the graph of $\sigma$
lies globally in some cone.

%
%
\begin{theorem}[(Connected case)]\label{thconnected1}
Suppose that $G$ is connected
and (\ref{eqLsigma}) holds.
Then $\underline{\mathfrak{e}}(t)\ge4$ for all $t>0$, provided that
in addition either $G$ is noncompact or $G$ is compact, metrizable and
has more than one element.
\end{theorem}

%
%
\begin{remark}
The proofs will show a slightly more general statement,
thanks to projection. Namely (see Proposition~\ref{prproj1})
that if $G$ contains a noncompact
connected LCA subgroup, or if $G$ contains a compact metrizable connected
LCA subgroup of more than one element,
then $\underline{\mathfrak{e}}(t)\ge4$ as long
as (\ref{eqLsigma}) holds.
\end{remark}

%
%
\begin{theorem}[(Connected case)]\label{thconnected2}
For every $\theta\ge4$, there are models of the triple
$(G,\mathscr{L},u_0)$ for which $\mathfrak{e}=\theta$.
\end{theorem}

The proofs of the above theorems are presented in Section~\ref{secproofs} below,
and use the results in Sections~\ref{secprojections} and~\ref{seclowerbound}.
In particular, Section~\ref{secprojections} enables us to obtain the
lower bound of the lower excitation index in Theorem~\ref
{thconnected1} ``by projection.''

We now see that if (\ref{eqLsigma}) holds, in addition to the
preceding conditions,
then Theorems~\ref{thdiscrete},~\ref{thconnected1}
and~\ref{thconnected2} together imply the following:
either the energy of the solution behaves
as $\exp(\operatorname{const}\cdot\,\lambda^2)$ or it is greater than\break
$\exp(\operatorname{const}\cdot\,\lambda^4)$ for large noise levels,
and this lower bound cannot be improved upon in general. Moreover,
the connectivity properties of $G$---and not the operator
$\mathscr{L}$---alone determine the first-order strength of the growth of
the energy, viewed as a function of the noise level $\lambda$.

Finally, we will soon see that
when the energy behaves as $\exp(\operatorname{const}\cdot\,\lambda^2)$,
this means that (\ref{SHE}) is only as noise excitable as a classical It\^o stochastic
differential equation. Martin Hairer has asked (private communication) whether
intermittency properties of (\ref{SHE}) are always related to those of the McKean
exponential martingale for Brownian motion. A glance at Example
\ref{ex1} below shows in some sense that,
as far as nonlinear noise excitation is concerned,
\textit{intermittent examples of}~(\ref{SHE}) \textit{behave
as the exponential martingale if and only if $G$ is essentially discrete}.

Throughout, $\lip$ designates the optimal Lipschitz constant of
the function $\sigma$. In more succinct terms, we have
%
%
\begin{equation}
\label{sigma} \lip:=\sup_{-\infty<x<y<\infty} \biggl\llvert\frac{\sigma
(x)-\sigma(y)}{x-y}
\biggr\rrvert<\infty.
\end{equation}

\section{Analysis on LCA groups}\label{secLCA}

We follow the usual terminology of the literature
and refer to a \emph{locally compact Hausdorff Abelian group} as an
\emph{LCA group}. Morris \cite{Morris} and Rudin \cite{Rudin} are
two standard references for the theory of LCA groups.

If $G$ is an LCA group, then we let
$m_G$ denote the Haar measure on $G$.\footnote{That is, $m_G$ is a nonzero
Radon measure on $G$ that is translation invariant under group multiplication.}
The dual, or character, group to $G$ denoted by $G^*$.\footnote{%
That is, $\chi\in G^*$ if and only if $\chi\dvtx G\to\mathbf{C}$ is a
group homomorphism from $G$ to the circle group; that is, $\chi$ is
homeomorphic
and satisfies $\chi(xy)=\chi(x)\chi(y)$ for all $x,y\in G$.
Every $\chi\in G^*$ is called a \emph{character} on $G$. Thus,
for instance, if $G=\R^d$, then $G^*=\R^d$ and $\chi(x)=\exp
(ix\cdot\chi)$.
Also, when $G=\mathbf{Z}^d$, then $G^*=[0,2\pi)^d$ and $\chi
(x)=\exp(ix\cdot\chi)$.}
In addition, the \emph{Fourier
transform} on $L^1(G)$ is defined via the following normalization:
%
%
\begin{equation}
\qquad \hat{f}(\chi):= \int_G (x,\chi) f(x) m_G( \d
x)\qquad\mbox{for all }\chi\in G^*\mbox{ and }f\in L^1(G),
\end{equation}
where $(x,\chi):=\chi(x):=x(\chi)$ are interchangeable notations that
all describe the natural pairing between $x\in G$ and $\chi\in
G^*$.\footnote{%
This notation is justified by the Pontryagin--van Kampen duality
theorem \cite{Morris,Rudin}:
the dual of $G^*$ is $G$. Consequently, $x\in G$ acts on $\chi\in G^*$
in the same way
as $\chi\in G^*$ acts on $x\in G$, whence $x(\chi)$ can be identified with
$\chi(x)$, as asserted. We emphasize that
different authors use slightly different normalizations of Fourier transforms
from us; see, for example, Rudin \cite{Rudin}.}

Of course, $m_G$ is defined uniquely only up to a multiplicative factor.
Therefore, we always assume the \emph{standard normalization} of
Haar measures; that is any normalization that ensures that
the Fourier transform has a continuous isometric extension to $L^2(G)
= L^2(G^*)$. Analytically speaking, this means that our normalization
of Haar measure ensures that the following formulation of the Plancherel
identity is valid:
%
%
\begin{equation}
\label{Plancherel} \llVert f\rrVert_{L^2(G)} = \llVert\hat{f}\rrVert
_{L^2(G^*)}\qquad\mbox{for all }f\in L^2(G).
\end{equation}

Our normalization of Haar measure translates to well-known
normalizations of Haar measures via Pontryagin--van Kampen duality
\cite{Morris,Rudin}:
\begin{longlist}[\textit{Case} 1.]
\item[\textit{Case} 1.] If $G$ is compact, then $G^*$ is discrete;
$m_G(G)=1$;
and $m_{G^*}$ denotes the counting measure on subsets of $\Gamma^*$.
\item[\textit{Case} 2.] If $G$ is discrete, then $G^*$ is compact,
$m_{G^*}(G^*)=1$,
and $m_G$ coincides with the counting measure on $G$.
\item[\textit{Case} 3.] If $G=\R^n$ for some integer $n\ge1$,
then $G^*=\R^n$; we may choose $m_G$ and
$m_{G^*}$, in terms of
$n$-dimensional Lebesgue measure, as $m_G(\d x)=a \,\d x$
and $m_{G^*}(\d x)=b \,\d x$ for any two positive reals $a$ and $b$
that satisfy the relation $ab=(2\pi)^{-n}$.
\end{longlist}

\section{Some examples}\label{secexamples}

The stochastic PDEs introduced here are quite natural;
in many cases, they are in fact well-established equations.
In this section, we identify some examples to highlight the preceding
claims. Of course, one can begin with the most obvious examples
of stochastic PDEs; for instance, where $G=\R$, $\mathscr{L}=\Delta
$, etc.
But we prefer to have a different viewpoint: as far as interesting examples
are concerned, it is helpful to sometimes think about concrete examples
of LCA groups $G$; then try to understand the L\'evy processes on $G$
(a kind of L\'evy--Khintchine formula) in order to know which
operators $\mathscr{L}$ are relevant. And only then one can think about
the actual resulting stochastic partial differential equation. This
slightly-different
viewpoint produces interesting examples.

%
%
\begin{example}[(The trivial group)]\label{ex1}
For our first example, let us
consider the trivial group $G$ with only one element $g$. The
only L\'evy process on this group is $X_t:=g$. All functions on the
group $G$ are, by default, constants. Therefore, $\mathscr{L}f=0$
for all $f\dvtx G\to\R$, and hence $U_t:= u_t(g)$ solves the It\^o SDE
%
%
\begin{equation}
\d U_t = \lambda\sigma(U_t) \,\d B_t\qquad
\mbox{with }U_0=u_0(g),
\end{equation}
where\vspace*{2pt} $B_t:=\int_{[0,t]\times G} \d\xi$ defines a Brownian motion.
In other
words, when $G$ is the trivial group, (\ref{SHE}) characterizes all drift-free
one-dimensional It\^o diffusions.
\end{example}

%
%
\begin{example}[(Cyclic groups, part~I)]\label{ex2}
For a slightly more interesting example consider the cyclic group
$G:=\mathbf{Z}_2$ on two elements. We may think of $G$ as $\mathbf
{Z}/2\mathbf{Z}$; that is,
the set $\{0,1\}$ endowed with binary addition (addition mod 1)
and discrete topology. It is an elementary fact that
the group $G$ admits only one 1-parameter family
of L\'evy processes. Indeed, we can apply the strong Markov property to
the first jump time of $X$ to see that if $X$ is a L\'evy process
on $\mathbf{Z}_2$, then there necessarily exists a number $\kappa\ge
0$ such that,
at independent exponential times,
the process $X$ changes its state at rate $\kappa$:
from 0 to 1 if $X$ is at 0 at the jump time, and from 1 to 0 when $X$
is at 1
at the jump time ($\kappa=0$ yields the constant process).
In this way, we find that (\ref{SHE}) is an encoding of the
coupled two-dimensional SDE
%
%
\begin{eqnarray}
\d u_t(0) &=& \kappa \bigl[ u_t(1)-u_t(0)
\bigr]\,\d t + \lambda\sigma \bigl(u_t(0) \bigr) \,\d
B_t(0),
\nonumber\\[-8pt]\\[-8pt]\nonumber
\d u_t(1) &=& \kappa \bigl[ u_t(0)-u_t(1)
\bigr]\,\d t + \lambda\sigma \bigl(u_t(1) \bigr) \,\d
B_t(1),
\end{eqnarray}
where $B(0)$ and $B(1)$ are two independent one-dimensional Brownian motions.
In other words, when $G=\mathbf{Z}_2$, (\ref{SHE}) describes a two-dimensional
It\^o diffusion with local diffusion coefficients where
the particles (coordinate processes)
feel an attractive linear drift toward their neighbors (unless $\kappa=0$,
which corresponds to two decoupled diffusions).
\end{example}

%
%
\begin{example}[(Cyclic groups, part II)]\label{ex3}
Let us consider the case that $G:=\mathbf{Z}_n$ is the cyclic group on $n$
elements when $n\ge3$. We may think of $G$ as $\mathbf{Z}/n\mathbf
{Z}$; that is,
the set $\{0,\ldots,n-1\}$ endowed with addition $(\mod n)$
and discrete topology.
If $X$ is a L\'evy process on $G$, then
it is easy to see that there exist $n-1$ parameters
$\kappa_1,\ldots,\kappa_{n-1}\ge0$ such that
$X$ jumps (at i.i.d. exponential times) from $i\in\mathbf{Z}/n\mathbf{Z}$
to $i+j(\mod n)$ at rate $\kappa_j$ for every $i\in\{0,\ldots,n-1\}$
and $j\in\{1,\ldots,n-1\}$.
In this case, our stochastic heat equation (\ref{SHE}) is another way to
describe the evolution of the $n$-dimensional It\^o diffusion
$(u(1),\ldots, u(n))$, where for all $i=0,\ldots,n-1$,
%
%
\begin{equation}
\d u_t(i) = \sum_{j=1}^{n-1}
\kappa_j \bigl[ u_t\bigl(i+j (\mod n)\bigr)-u_t(i)
\bigr] \,\d t +\lambda\sigma \bigl(u_t(i) \bigr) \,\d
B_t(i), \label{eqZn}
\end{equation}
for an independent system $B(0),\ldots,B(n-1)$ of one-dimensional
Brownian motions.
Thus, in this example, (\ref{SHE}) encodes all possible $n$-dimensional
diffusions with
local diffusion coefficients and Ornstein--Uhlenbeck type attractive drifts.
Perhaps the most familiar example of this type is
the simple symmetric case in which $\kappa_1=\kappa_{n-1}:=\kappa>0$
and $\kappa_j=0$ for $j\notin\{1,n-1\}$. In that case,
(\ref{eqZn}) simplifies to
%
%
\begin{equation}
\d u_t(i) = \kappa(\Delta u_t) (i) + \lambda\sigma
\bigl(u_t(i) \bigr) \,\d B_t(i),
\end{equation}
where $(\Delta f)(i):= f(i\boxplus1)+f(i\boxminus1)-2f(i)$
denotes the ``group Laplacian'' of $f\dvtx \mathbf{Z}_n\to\R$,
$a\boxplus b:=a+b(\mod n)$, and $a\boxminus b:=a-b(\mod n)$.
\end{example}

%
%
\begin{example}[(Lattice groups)]\label{ex4}
In this example, $G$ denotes a lattice subgroup of $\R^d$.
This basically means that $G=\delta\mathbf{Z}^d$ for some $\delta>0$
and $d=1,2,\ldots.$ The class of all L\'evy processes on $G$
coincides with
the class of all continuous-time random walks on $G$. Thus, standard
random walk
theory tells us that there exists a constant $\kappa\ge0$---the\vspace*{1pt}
rate---and a probability
function $\{J(y)\}_{y\in\delta\mathbf{Z}^d}$---the so-called jump
measure---such that
$(\mathscr{L}f)(x)=\kappa\sum_{y\in\delta\mathbf{Z}^d}
\{ f(y) - f(x)\} J(y)$, and hence (\ref{SHE}) is an encoding of the following
infinite system of interacting It\^o-type stochastic differential equations:
%
%
\begin{equation}
\d u_t(x) = \kappa\sum_{y\in\delta\mathbf{Z}^d} \bigl[
u_t(y)-u_t(x) \bigr] J(y) + \lambda\sigma
\bigl(u_t(x) \bigr) \,\d B_t(x),
\end{equation}
for i.i.d. one-dimensional Brownian motions $\{B(z)\}_{z\in\delta
\mathbf{Z}^d}$
and all $x\in\delta\mathbf{Z}^d$. A particularly well-known case is when
$J(y)$ puts equal mass on the neighbors of the origin in $\delta
\mathbf{Z}^d$. In that
case,
%
%
\begin{equation}
\d u_t(x) = \frac{\kappa}{2d}(\Delta u_t) (x) +
\lambda\sigma \bigl(u_t(x) \bigr) \,\d B_t(x),
\end{equation}
where $(\Delta f)(x):= \sum_{|y-x|= 1} \{f(y)-f(x)\}$ denotes the graph
Laplacian of $f\dvtx \delta\mathbf{Z}^d\to\R$ with $\llvert y-x\rrvert:=\sum_{i=1}^d\llvert y_i-x_i\rrvert$.
\end{example}

%
%
\begin{example}[(The real line)]\label{ex5}
As an example, let us choose $G:=\R$ and $X:={}$one-dimensional
Brownian motion
on $\R$. Then $\mathscr{L}f=f''$ and (\ref{SHE}) becomes the usual
stochastic heat equation
%
%
\begin{equation}
\frac{\partial u_t(x)}{\partial t} = \kappa\frac{\partial^2
u_t(x) }{\partial x^2} + \lambda\sigma
\bigl(u_t(x) \bigr)\xi,
\end{equation}
driven by space--time white noise on $(0,\infty)\times\R$.
\end{example}

%
%
\begin{example}[(The torus)]\label{ex6}
Next, we may consider $G:=[0,1)$; as usual we identify the ends of
$[0,1)$
in order to obtain the torus $G:=\mathbf{T}$, endowed with addition
mod 1.
Let $X:={}$Brownian motion on $\mathbf{T}$. Its generator is easily
seen to be
the Laplacian on $[0,1)$ with periodic boundary conditions. Hence, (\ref{SHE})
encodes
%
%
\begin{equation}
\lleft[ %
\begin{array} {l} \displaystyle\frac{\partial u_t(x)}{\partial t}=
\kappa \frac
{\partial^2 u_t(x) }{\partial x^2} + \lambda\sigma \bigl(u_t(x) \bigr)\xi
\qquad \mbox{for all }0\le x<1,
\\[8pt]
\mbox{subject to }u_t(0)=u_t(1-),
\end{array}
\rright.
\end{equation}
in this case.
\end{example}

%
%
\begin{example}[(Totally disconnected examples)]\label{ex7}
Examples~\ref{ex1} through \ref{ex6} are concerned with
more or less standard SDE/SPDE models. Here, we mention one among
many examples where (\ref{SHE}) is more exotic. Consider
$G:=\mathbf{Z}_2\times\mathbf{Z}_2\times\cdots$ to be a countable
direct product of
the cyclic group on two elements. Then $G$ is a compact Abelian group;
this is a group that acts transitively on binary trees and is related
to problems
in fractal percolation. A L\'evy process\vspace*{2pt} on $G$ is simply
a process that has the form $X_t^1\times X_t^2\times\cdots$ at time
$t\ge0$, where
$X^1\times\cdots\times X^k$ is a L\'evy process on
$\prod_{i=1}^k\mathbf{Z}_2$ for every $k\ge1$ (see Example~\ref{ex1}). It is easy to see then that if $f\dvtx G\to\R$ is a function
that is
constant in every coordinate except for the coordinates in some finite
set $F$, then
the generator of $X$ acts on $f$
as $\prod_{j\in F}\mathscr{L}^jf$, where
$\mathscr{L}^j$ denotes the generator of $X^j$ (see Example~\ref{ex1})
and $\mathscr{AB}$ denotes the compositions of operators $\mathscr{A}$
and $\mathscr{B}$.
The resulting stochastic heat equation (\ref{SHE}) is not the subject of our
analysis here
\textit{per se}. Thus, we mention only in passing that, in this case, (\ref{SHE})
appears to have connections to interacting random walks on
a random environment on a binary tree.
\end{example}

%
%
\begin{example}[(Positive multiplicative reals)]
Our next, and last example, requires a slightly longer discussion
than its predecessors.
But we feel that this is an illuminating example, and thus worth the effort.

Let
%
%
\begin{equation}
h(x):= {\e}^x\qquad(x\in\R).
\end{equation}
The range $G:= h(\R)$ of the function $h$ is the multiplicative
positive reals. Frequently, one writes $G$ as $\R^\times_{>0}$;
this is an LCA group, and $h$ is
an isomorphism between $\R$ and $\R^\times_{>0}$. [There are of
course other
topological isomorphisms from $\R$ to $\R^\times_{>0}$; in fact,
$\R\ni x\mapsto\exp(qx)\in\R^\times_{>0}$ works for every real
number $q\neq0$.]
As~$h$ also maps $G^*$ to $\R^*=\R$ homomorphically as well, it
follows that
the dual of $\R^\times_{>0}$ is $\R$, and that the Fourier transform on
$\R^\times_{>0}$ is none other than the classical Mellin transform.

Since $h(x)={\e}^x$ is a topological isomorphism from $\R$ onto $\R
_{>0}^\times$,
every L\'evy process $X:=\{X_t\}_{t\ge0}$ on $\R_{>0}^\times$ can be
written as $X_t=\exp(Y_t)$, where $Y:=\{Y_t\}_{t\ge0}$ is a L\'evy process
on $\R$. An interesting special case is $Y_t=B_t+\delta t$, where
$B:=\{B_t\}_{t\ge0}$
denotes one-dimensional Brownian motion on $\R$ and $\delta\in\R$
is a parameter. Thus,
%
%
\begin{equation}
t\mapsto X_t:= {\e}^{B_t+\delta t}
\end{equation}
defines a continuous L\'evy process on $\R_{>0}^\times$. The
best-known example
is the case that $\delta=\nicefrac{-1}2$, in which case $X$ is the exponential
martingale.

An application of It\^o's formula (or an appeal to classical
generator computations) shows that
if $f\in C^\infty(\R)$, then for all $x>0$,
%
%
\begin{equation}
\qquad \E f(xX_t) = f(x) + \frac{t}{2} x^2f''(x)
+ \frac{t(1+2\delta)}{2}xf'(x) +o(t)\qquad\mbox{as }t\downarrow0.
\end{equation}
Thus, we can summarize the preceding as follows:
the exponential martingale is a L\'evy process on $\R_{>0}^\times$
with generator\vspace*{1pt}
$(\mathscr{L}f)(x) = \frac{1}2 x^2 f''(x) + (\delta+\frac{1}2)xf'(x)$.
Thus, we can understand
our stochastic heat equation (\ref{SHE}), in this context, as the following
Euclidean SPDE:
%
%
\begin{equation}
\frac{\partial u_t(x)}{\partial t} = \frac{x^2}{2} \frac{\partial^2
u_t(x)}{\partial x^2} + \biggl(\delta+
\frac{1}2 \biggr)x\frac{\partial u_t(x)}{\partial x}+ \lambda\sigma \bigl(u_t(x)
\bigr)\xi_h;
\end{equation}
for $t,x>0$. Moreover, $\xi_h$ denotes a space--time white noise
on $(0,\infty)\times(0,\infty)$ whose control measure is
proportional to $x^{-1}\, \d t \,\d x\,\1_{(0,\infty)^2}(t,x)$
[the restriction of the Haar measure on $\R_+\times{\mathbf
R}_{>0}^\times$ to
$(0,\infty)\times{\mathbf R}_{>0}^\times$]. We expend a few lines
and make the following amusing observation as an aside: from the
perspective of
these SPDEs, the most natural case is the drift-free case where $\delta
=\nicefrac{-1}2$.
In that case, the underlying L\'evy process $X$ is the exponential martingale,
as was noted earlier. The exponential martingale is one of the
archetypal classical
examples of an intermittent process \cite{ZRS}.
Moreover, $X$ is centered when $\delta=\nicefrac{-1}2$
in the sense that $\E X_t$ is the group identity.
Interestingly enough, the exponential martingale is natural
in other sense as well: (1) The process $X$ is a natural candidate for being
a ``Gaussian'' process with values in the group $\R^\times_{>0}$
in the sense that $X$ is the image of a real-valued Gaussian process
under the exponential map; and (2) $X$ has quadratic variation $t$,
that is,
%
%
\begin{equation}
\quad\lim_{n\to\infty} \sum_{0\le k\le2^nt} \bigl[
X_{(k+1)/2^n} X_{k/2^n}^{-1} \bigr]^2 =t\qquad
\mbox{almost surely for all }t\ge0.
\end{equation}
This property can be verified by standard methods.
\end{example}

\section{L\'evy processes} \label{secLevy}
Let us recall some basic facts about L\'evy processes on~LCA groups.
For more details, see Berg and Forst \cite{BergForst}
and Port and Stone \mbox{\cite{PortStonea,PortStoneb}}.
Bertoin \cite{Bertoin} and Jacob \cite{Jacob}
are masterly accounts of the probabilistic and analytic aspects of the
theory of L\'evy
processes on $\R^n$ and $\mathbf{Z}^n$.

Throughout, $(\Omega,\F,\P)$ is a fixed probability space.

Let $G$ denote an LCA group, and suppose
$Y:=\{Y_t\}_{t\ge0}$ is a stochastic process
on $(\Omega,\F,\P)$ with values in $G$.
[We always opt to write $Y_t$ in place of $Y(t)$, as is customary in
the theory of stochastic processes.]
We say that $Y$ is a \emph{L\'evy process} on $G$ if:
\begin{longlist}[(2)]
\item[(1)] $Y_0={\e}_G$, the identity element of $G$;
\item[(2)] $Y_{t+s}Y_s^{-1}$ is independent of $\{Y_u\}_{u\in[0,s]}$
and has the same distribution as $Y_t$, for all $s,t\ge0$; and
\item[(3)] the random function $t\mapsto Y_t$ is right continuous
and has left limits everywhere with probability one.
\end{longlist}

Our definition might appear to be slightly more stringent than
the standard definition, but turns out to be equivalent to the standard
definition,
for instance, when $G$ is metrizable.

Let $\mu_t:=\P\circ Y_t^{-1}$ denote the distribution of the
random variable $Y_t$.
Then $\{P_t\}_{t\ge0}$ is a convolution semigroup, where
%
%
\begin{equation}
(P_tf) (x):= \E f(x Y_t):= \int_G
f(xy) \mu_t(\d y).
\end{equation}
We can always write the Fourier transform of the probability
measure $\mu_t$ as follows:
%
%
\begin{equation}
\hat{\mu}_t(\chi) = \E(Y_t, \chi) = {
\e}^{-t\Psi(\chi)} \qquad\mbox{for all }t\ge0\mbox{ and }\chi\in G^*,
\end{equation}
where $\Psi\dvtx G^*\to\mathbf{C}$ is continuous and
$\Psi({\e}_{G^*})=0$.
It is easy to see that Dalang's condition (\ref{D}) always implies the following:
%
%
\begin{equation}
\label{eqPDF} \int_{G^*} {\e}^{-t\Re\Psi(\chi)}
m_{G^*} (\d\chi)<\infty\qquad\mbox{for all }t>0.
\end{equation}
See, for example, \cite{FKN}, Lemma 8.1.
In this case, the following is well defined:
%
%
\begin{equation}
\label{eqinversion} p_t(x) = \int_{G^*}
\bigl(x^{-1},\chi \bigr){\e}^{-t\Psi(\chi)} m_{G^*}(\d\chi)
\qquad\mbox{for all }t>0\mbox{ and }x\in G.
\end{equation}

The following is a consequence of Fubini's theorem.

%
%
\begin{lemma}\label{lemTrDensity}
The function $(t,x)\mapsto p_t(x)$ is well defined and bounded as
well as
uniformly continuous
for $(t,x)\in[\delta,\infty)\times G$ for every fixed $\delta
>0$. Moreover,
we can describe the semigroup via
%
%
\begin{equation}
\qquad (P_tf) (x) = \int f(xy)p_t(y) m(\d y)\qquad\mbox{for
all }t>0, x\in G, f\in L^1(G).
\end{equation}
Consequently, $p_t(x)\ge0$
for all $t>0$ and $x\in G$.
\end{lemma}

We omit the proof, as it is elementary. Let us mention, however, that
the preceding lemma
guarantees that the Chapman--Kolmogorov equation holds pointwise. That is,
%
%
\begin{equation}
\label{eqChapman-Kolmogorov} p_{t+s}(x) = (p_t*p_s) (x)
\qquad\mbox{for all }s,t>0\mbox{ and }x\in G,
\end{equation}
where ``$*$'' denotes the usual convolution on $L^1(G)$, that is,
%
%
\begin{equation}
(f*g) (x):=\int_G f(y)g \bigl(xy^{-1} \bigr)
m_G(\d y).
\end{equation}

Define, for all $t>0$ and $x\in G$,
%
%
\begin{equation}
\bar{p}_t(x):= (P_tp_t) (x) = \int
_G p_t(xy) p_t(y)
m_G(\d y).
\end{equation}
In particular, we apply the preceding with $x:={\e}_G$ in order to
see that
%
%
\begin{equation}
\bar{p}_t({\e}_G) = \llVert p_t\rrVert
_{L^2(G)}^2 \qquad\mbox{for all }t>0.
\end{equation}
Furthermore, it can be shown that the following inversion
theorem holds for all $t>0$ and $x\in G$:
%
%
\begin{equation}
\bar{p}_t(x) = \int_{G^*} \bigl(x^{-1},
\chi \bigr){\e}^{-2t\Re\Psi(\chi
)} m_{G^*}(\d\chi).
\end{equation}
Thus, we find that
%
%
\begin{equation}
\label{eqUpsilon} \Upsilon(\beta):= \int_0^\infty{
\e}^{-\beta t}\llVert p_t\rrVert_{L^2(G)}^2 \,\d t
\end{equation}
satisfies
%
%
\begin{equation}
\label{eqUpsilon1} \Upsilon(\beta):= \int_0^\infty{
\e}^{-\beta t}\llVert p_t\rrVert_{L^2(G)}^2 \,\d t = \int_{G^*}\frac{m_{G^*}(\d\chi)}{\beta+2\Re\Psi(\chi)}.
\end{equation}
Consequently, Dalang's condition (\ref{D}) can be recast equivalently and
succinctly as
the condition that $\Upsilon\dvtx [0,\infty)\to[0,\infty]$ is finite
on $(0,\infty)$.

Since $t\mapsto\int_0^t\bar{p}_s({\e}_G)\, \d s$ is nondecreasing,
Lemma 3.3 of \cite{FKN} implies the following Abelian/Tauberian bound:
%
%
\begin{equation}
\label{eqpUpsilon} {\e}^{-1}\Upsilon(1/t) \le\int_0^t
\bar{p}_s({\e}_G)\, \d s\le{\e}\Upsilon(1/t) \qquad
\mbox{for all }t>0.
\end{equation}
Finally, by \emph{the generator} of $\{X_t\}_{t\ge0}$ we mean
the linear operator $\mathscr{L}$ with domain
%
%
\begin{equation}
\operatorname{Dom}[\mathscr{L}]:= \Bigl\{ f\in L^2(G)\dvtx
\mathscr{L}f:= \lim_{t\downarrow0} t^{-1}(P_tf-f)
\mbox{ in }L^2(G) \Bigr\}.
\end{equation}
This defines $\mathscr{L}$ as an $L^2$-generator, which is a slightly
different operator than the one that is usually obtained from the
Hille--Yosida theorem.
The $L^2$-theory makes good sense here for a number of reasons; chief among
them is the fact that $G$ need not be second countable, and hence the
standard form
of the Hille--Yosida theorem is not applicable. The $L^2$-theory has
the added advantage
that the domain is more or less explicit, as will be seen shortly.

Recall that each $P_t$ is a contraction on $L^2(G)$, and observe that
%
%
\begin{equation}
\widehat{P_t f}(\chi) = \hat{f}(\chi) \exp \bigl\{-t \overline{
\Psi(\chi)} \bigr\}\qquad\mbox{for all }t\ge0\mbox{ and }\chi\in G^*.
\end{equation}
Therefore, for all $f,g\in L^2(G)$,
%
%
\begin{equation}
\int_G g(P_tf-f)\, \d m_G = -\int
_{G^*} \hat{f}(\chi) \overline{\hat{g}(\chi)} \bigl( 1 - {
\e}^{-t\overline{\Psi(\chi)}} \bigr) m_{G^*}(\d\chi).
\end{equation}
It follows fairly readily from this relation that $\mathscr
{L}\dvtx \operatorname{Dom}[\mathscr{L}]
\to L^2(G)$,
%
%
\begin{equation}
\operatorname{Dom}[\mathscr{L}] = \biggl\{ f\in L^2(G)\dvtx \int
_{G^*} \bigl\llvert\hat{f}(\chi) \bigr\rrvert^2
\bigl\llvert\Psi(\chi) \bigr\rrvert^2 m_{G^*}(\d\chi)< \infty
\biggr\},
\end{equation}
and for all $f\in\operatorname{Dom}[\mathscr{L}]$ and $g\in L^2(G)$,
%
%
\begin{equation}
\int_G g\mathscr{L}f \,\d m_G = - \int
_{G^*}\hat{f}(\chi) \overline{\hat{g}(\chi) \Psi(\chi)}
m_{G^*}(\d\chi).
\end{equation}
The latter identity is another way to write
%
%
\begin{equation}
\widehat{\mathscr{L}f}(\chi) = -\hat{f}(\chi)\overline{\Psi(\chi)} \qquad
\mbox{for all }f\in\operatorname{Dom}[\mathscr{L}]\mbox{ and }\chi\in G^*.
\end{equation}
In other words, $\mathscr{L}$ is a pseudo-differential operator on $L^2(G)$
with Fourier multiplier (``symbol'') $-\overline{\Psi}$.

\section{Stochastic convolutions}\label{secconvolutions}
Throughout this paper, $\xi$ will denote space--time white noise on
$\R_+\times G$. That is, $\xi$ is a set-indexed Gaussian random field,
indexed by Borel subsets of $\R_+\times G$ that have finite measure
$\operatorname{Leb}\times m_G$
(product of Lebesgue and Haar measures, resp., on $\R_+$ and
$G$). Moreover,
$\E\xi(A\times T)=0$ for all measurable $A\subset\R_+$ and
$T\subset G$
of finite measure (resp., Lebesgue and Haar), and
%
%
\begin{equation}
\operatorname{Cov} \bigl( \xi(B\times T), \xi(A\times S) \bigr) =
\operatorname{Leb}(B\cap A)\cdot m_G (T\cap S),
\end{equation}
for all Borel sets $A,B\subset\R_+$ that have finite Lebesgue measure
and all Borel sets $S,T\subseteq G$ that have finite Haar measure. It
is easy to see
that $\xi$ is then a vector-valued measure with values in $L^2(\P)$.

The principal goal of this section is to introduce and study stochastic
convolutions of the form
%
%
\begin{equation}
(K\circledast Z )_t(x):= \int_{(0,t)\times G}
K_{t-s} \bigl(yx^{-1} \bigr) Z_s(y) \xi(\d s \,\d y),
\end{equation}
where $Z$ is a suitable space--time random field and $K$ is a nice nonrandom
space--time function from $(0,\infty)\times G$ to $\R$;
Lemma~\ref{lemYoung} below will make precise the meaning
of ``suitable'' in this context.

If $Z$ is a predictable random field, in the sense of Walsh \cite
{Walsh} and Dalang
\cite{Dalang}, and satisfies
%
%
\begin{equation}
\quad \sup_{t\in[0,T]}\sup_{x\in G}\E \bigl( \bigl\llvert
Z_t(x) \bigr\rrvert^2 \bigr)<\infty,\qquad \int
_0^T\d s\int_G
m_G(\d y) \bigl[K_s(y) \bigr]^2<\infty,
\end{equation}
for all $T>0$, then the stochastic convolution $K\circledast Z$ is the
same stochastic
integral that has been obtained in Walsh \cite{Walsh} and, in
particular, Dalang \cite{Dalang}.
One of the essential properties of the resulting stochastic integral is the
following $L^2$ isometry:
%
%
\begin{equation}
\quad\E \bigl( \bigl\llvert(K\circledast Z )_t(x) \bigr\rrvert
^2 \bigr) = \int_0^t\d s\int
_G m_G(\d y) \bigl[ K_{t-s}
\bigl(yx^{-1} \bigr) \bigr]^2\E \bigl( \bigl\llvert
Z_s(y) \bigr\rrvert^2 \bigr).
\end{equation}

In this section, we briefly describe an extension of the Walsh--Dalang
stochastic integral
that has the property that $t\mapsto(K\circledast Z)_t$ is a
stochastic process
with values in the group algebra $L^2(G)$. Thus, the resulting
stochastic convolution
need not be, and in general is not, a random field in the modern sense
of the word.
Rather, we can realize the stochastic convolution
process $t\mapsto(K\circledast Z)_t$ as a Hilbert-space-valued
stochastic process, where the Hilbert space is $L^2(G)$.

Our construction has a similar flavor as some other recent constructions;
see, in particular, Da Prato and Zabczyk \cite{DZ} and
Dalang and Quer--Sardanyons \cite{DalangQuer}. However, our construction
also has some novel aspects.

Let us set forth some notation first.
As always, let $(\Omega,\F,\P)$ denote a probability space.

%
%
\begin{definition}
Let $Z:=\{Z_t(x)\}_{t\in I,x\in G}$ be a two-parameter (space--time)
real-valued stochastic process
indexed by $I\times G$, where $I$ is a measurable subset of $\R_+$.
We say that $Z$ is a \emph{random field} when
the function $Z\dvtx (\omega,t,x)\mapsto Z_t(x)(\omega)$
is product measurable from $\Omega\times I\times G$ to $\R$.
\end{definition}

The preceding definition is somewhat unconventional; our random fields are
frequently referred to as ``universally measurable random fields.''
Because we
will never have need for any other random fields than universally measurable
ones, we feel justified in abbreviating the terminology.

%
%
\begin{definition}
For every random field $Z:=\{Z_t(x)\}_{t\ge0,x\in G}$
and $\beta\ge0$, let us define
%
%
\begin{equation}
\label{N} \mathcal{N}_\beta(Z;G):= \sup_{t\ge0}
\bigl\{ {\e}^{-2\beta t} \E \bigl( \llVert Z_t\rrVert
_{L^2(G)}^2 \bigr) \bigr\}^{1/2}.
\end{equation}
We may sometimes only write $\mathcal{N}_\beta(Z)$ when it is clear
which underlying group we are referring to.
\end{definition}

Each $\mathcal{N}_\beta$ defines a norm on space--time random fields,
provided that we identify a random field with all of its versions.

%
%
\begin{definition}\label{defL2beta}
For every $\beta\ge0$, we define $\mathcal{L}^2_\beta(G)$ be the $L^2$-space
of all measurable functions $\Phi\dvtx (0,\infty)\times G\to\R$
with $\llVert \Phi\rrVert _{\mathcal{L}^2_\beta(G)}<\infty$, where
%
%
\begin{equation}
\llVert\Phi\rrVert_{\mathcal{L}^2_\beta(G)}^2:= \int_0^\infty{
\e}^{-2\beta s}\llVert\Phi_s\rrVert_{L^2(G)}^2\,\d s.
\end{equation}
\end{definition}

We emphasize that the elements of $\mathcal{L}^2_\beta(G)$ are
nonrandom.

Define, for every $\varphi\in L^2(G)$ and $t\ge0$,
%
%
\begin{equation}
B_t(\varphi):= \int_{(0,t)\times G}\varphi(y) \xi(\d s \,\d y).
\end{equation}
The preceding is understood as a Wiener integral, and it is easy to see that
$\{B_t(\varphi)\}_{t\ge0}$ is Brownian motion scaled to have variance
$\llVert \varphi\rrVert _{L^2(G)}$ at time one. Let $\F_t$ denote the
$\sigma$-algebra
generated by all random variables of the form
$B_s(\varphi)$, as $s$ ranges within $[0,t]$
and $\varphi$ ranges within $L^2(G)$. Then $\{\F_t\}_{t\ge0}$ is
the (raw) filtration of the white noise $\xi$. Without changing the
notation, we will complete $[\P]$ every \mbox{$\sigma$-}algebra $\F_t$ and
also make $\{\F_t\}_{t\ge0}$ right continuous in the usual way. In
this way,
we may apply the martingale-measure machinery of Walsh \cite{Walsh}
whenever we need to.

A space--time stochastic process $Z:=\{Z_t(x)\}_{t\ge0,x\in G}$
is called an \emph{elementary random field} \cite{Walsh} if we can write
$Z_t(x) = X \mathbf{1}_{[a,b)}(t)\psi(x)$,
where $0<a<b$, $\psi\in C_c(G)$
(the usual\vspace*{1pt} space of real-valued continuous functions with compact
support on $G$),
and $X\in L^2(\P)$ is $\F_a$-measurable. Clearly, elementary random
fields are random fields in the sense mentioned earlier.

A space--time stochastic process is a \emph{simple random field} \cite{Walsh}
if it is a finite nonrandom sum of elementary random fields.

%
%
\begin{definition}\label{defP}
For\vspace*{2pt} every $\beta\ge0$,
we define $\mathcal{P}^2_\beta(G)$ to be the completion of the
collection of simple
random fields in the norm $\mathcal{N}_\beta$.
We may observe that: (i)~Every $\mathcal{P}^2_\beta(G)$ is a
Banach space, once endowed
with norm $\mathcal{N}_\beta$; and
(ii)~if $\alpha<\beta$, then $\mathcal{P}^2_\alpha(G)
\subseteq\mathcal{P}^2_\beta(G)$.
\end{definition}

We can think of an element of $\mathcal{P}^2_\beta(G)$ as a
``predictable random field'' in some extended sense.

Let us observe that if $K\in\mathcal{L}^2_\beta(G)$, then
$\int_0^T\d s\int_G m_G(\d y) [K_s(y)]^2<\infty$
all \mbox{$T>0$}. Indeed,
%
%
\begin{equation}
\int_0^T\d s\int_G
m_G (\d y) \bigl[K_s(y) \bigr]^2 \le{
\e}^{2\beta T}\llVert K\rrVert_{\mathcal{L}^2_\beta
(G)}^2.
\end{equation}
Therefore, we can define the stochastic convolution
$K\circledast Z$ for all simple random fields $Z$
and all $K\in\mathcal{L}^2_\beta(G)$ as in Walsh \cite{Walsh}.
The following\vspace*{2pt} yields further information on this stochastic convolution.
For other versions of such stochastic Young inequalities, see
Foondun and Khoshnevisan \cite{FoondunKh}, and especially
Conus and Khoshnevisan \cite{ConusK}.

%
%
\begin{lemma}[(Stochastic Young inequality)]\label{lemYoung}
Suppose that $Z$ is a simple random field and $K\in\mathcal
{L}^2_\beta(G)$
for some $\beta\ge0$. Then $K\circledast Z\in\mathcal{P}^2_\beta(G)$,
and
%
%
\begin{equation}
\label{eqYoung} \mathcal{N}_\beta( K\circledast Z ) \le
\mathcal{N}_\beta(Z)\cdot\llVert K\rrVert_{\mathcal{L}^2_\beta(G)}.
\end{equation}
\end{lemma}

If $K\in\mathcal{L}^2_\beta(G)$, then
Walsh's theory \cite{Walsh} produces a space--time stochastic
process $(t,x)\mapsto(K\circledast Z)_t(x)$; that is, a collection of
random variables $(K\circledast Z)_t(x)$, one for every $(t,x)\in
(0,\infty)\times G$.
Thus, the stochastic convolution in Lemma~\ref{lemYoung} is well defined.

Lemma~\ref{lemYoung} implies that the stochastic convolution operator
$K\circledast\bullet$ is a bounded
linear map from $Z\in\mathcal{P}^2_\beta(G)$ to
$K\circledast Z\in\mathcal{P}^2_\beta(G)$
with operator norm being at most $\llVert K\rrVert _{\mathcal
{L}^2_\beta(G)}$.
In particular, it follows readily from this lemma that $K\circledast Z$ is
a random field, since it is an element of $\mathcal{P}^2_\beta(G)$.

\begin{pf*}{Proof of Lemma \ref{lemYoung}}
It suffices to consider the case that $Z$ is an elementary random field.

Let us say that a function $K\dvtx (0,\infty)\times G\to\R$ is \emph
{elementary}
(in the sense of Lebesgue) if we can write $K_s(y) = A\mathbf{1}_{[c,d)}(s)
\phi(y)$ where $A\in\R$, $0\le c<d$, and $\phi\in C_c(G)$
(the usual space of continuous real-valued functions on $G$ that have
compact support).
Let us say also that $K$ is a \emph{simple}
function (also in the sense of Lebesgue) if it is a finite sum of
elementary functions.
These are small variations on the usual definitions of the Lebesgue theory
of integration. But they produce the same theory as that of Lebesgue.
Here, these variations are particularly handy.

From now on, let us choose and fix some constant
$\beta\ge0$, and let us observe that if $K$ were an elementary
function, then
$K\in\mathcal{L}^2_\beta(G)$ for every $\beta\ge0$.

Suppose we could establish (\ref{eqYoung}) in the case that $K$ is
an elementary function. Then of course (\ref{eqYoung}) also holds
when $K$ is a simple function. Because $C_c(G)$ is dense in $L^1(m_G)$
\cite{Rudin}, E8, page~268,
the usual form of Lebesgue's theory ensures that simple
functions are dense in $\mathcal{L}^2_\beta(G)$.
Therefore, by density, if we could prove that ``$K\circledast Z\in
\mathcal{P}^2_\beta(G)$''
and (\ref{eqYoung}) both hold in the case that $K$ is elementary,
then we can
deduce ``$K\circledast Z\in\mathcal{P}^2_\beta(G)$'' and
(\ref{eqYoung}) for all $K\in\mathcal{L}^2_\beta(G)$.
This reduces our entire problem to the case where $Z$ is an elementary random
field and $K$ is an elementary function, properties that we assume to
be valid
throughout the remainder of this proof. Thus, from now on
we consider
%
%
\begin{equation}
\label{eqKZ} K_s(y) = A\cdot\mathbf{1}_{[c,d)}(s) \phi(y)
\quad\mbox{and}\quad Z_t(x) = X\cdot\mathbf{1}_{[a,b)}(t)
\psi(x),
\end{equation}
where $A\in\R$, $0\le c<d$, $0<a<b$, $X\in L^2(\P)$ is $\F_a$-measurable,
$\psi\in C_c(G)$, and $\phi\in C_c(G)$.
The remainder of the proof works is divided naturally into three
steps.

\begin{longlist}[\textit{Step} 2.]
\item[\textit{Step} 1 (\textit{measurability}).]
We first show that $K\circledast Z$ is a random field in the sense of
this paper.

Choose and fix some $T>0$.
According to the Walsh theory \cite{Walsh},
%
%
\begin{equation}
(K\circledast Z)_t(x) = AX\cdot\int_{\mathcal{T}(t)\times G} \phi
\bigl(yx^{-1} \bigr)\psi(y) \xi(\d s \,\d y),
\end{equation}
where $\mathcal{T}(t):= (0,t)\cap[a,b)\cap[t-d,t-c)$,
and the stochastic integral can be understood as a Wiener integral,
since the integrand is nonrandom and square integrable $[\d s\times
m_G(\d y)]$.
In particular, we may observe that for all $x,w\in G$ and $t\in[0,T]$,
%
%
\begin{eqnarray}
&& \E \bigl( \bigl\llvert(K\circledast Z)_t(x) - (K\circledast
Z)_t(w) \bigr\rrvert^2 \bigr)\nonumber
\\
&&\qquad = A^2\E
\bigl(X^2 \bigr) \bigl\llvert\mathcal{T}(t) \bigr\rrvert\cdot\int
_G m_G(\d y) \bigl[\psi(y)
\bigr]^2 \bigl\llvert\phi \bigl(yx^{-1} \bigr)-\phi
\bigl(yw^{-1} \bigr) \bigr\rrvert^2
\\
&&\qquad \le\operatorname{const}\cdot\int_G \bigl\llvert\phi
\bigl(yw^{-1}x \bigr)-\phi(y) \bigr\rrvert^2
m_G(\d y),\nonumber
\end{eqnarray}
where $\llvert\mathcal{T}(t)\rrvert=t(b-a)(d-c)$ denotes the Lebesgue
measure of
$\mathcal{T}(t)$,
and the implied constant does not depend on $(t,x,w)\in[0,T]\times
G\times G$. Similarly,
for every $0\le t\le\tau\le T$ and $x\in G$,
%
%
\begin{equation}
\E \bigl( \bigl\llvert(K\circledast Z)_t(x) - (K\circledast
Z)_\tau(x) \bigr\rrvert^2 \bigr) \le\operatorname{const}\cdot\,(\tau-t),
\end{equation}
where the implied constant does not depend on $(t,x, w)\in[0,T]\times
G\times G$.
Consequently,
%
%
\begin{equation}
\label{eqcontL2} \mathop{\lim_{x\to w}}_{t\to\tau} \E \bigl(
\bigl\llvert( K\circledast Z )_t(x) - ( K\circledast Z
)_\tau(w) \bigr\rrvert^2 \bigr)=0,
\end{equation}
uniformly for all $\tau\in[0,T]$ and $w\in G$.
In light of a separability theorem of Doob~\cite{Doob}, Chapter~2, the
preceding implies that
$(\Omega,(0,\infty),G)\ni
(\omega,t,x)\mapsto(K\circledast Z)_t(x)(\omega)$
has a product-measurable version.\footnote{As written, Doob's theorem is
applicable to the case of stochastic processes that are
indexed by Euclidean spaces. But the very same proof will work for processes
that are indexed by $\R_+\times G$.}
\end{longlist}

\begin{longlist}[\textit{Step} 2.]
\item[\textit{Step} 2 (\textit{extended predictability}).]
Next, we prove that $K\circledast Z\in\mathcal{P}^2_\beta(G)$.

Let\vspace*{1pt} us define another elementary function $\bar{K}_s(y):= A\mathbf
{1}_{[c,d)}(s)
\bar\phi(y)$ where $A$~and~$(c,d)$ are the same as they were in the
construction of $K$, but $\bar\phi\in L^2(G)$ is not necessarily the
same as $\phi$. It is easy to see that
%
%
\begin{eqnarray}\label{eqprepre}
&& \E \bigl( \bigl\llvert(K\circledast Z)_t(x) - (
\bar{K}\circledast Z)_t(x) \bigr\rrvert^2 \bigr)\nonumber
\\
&&\qquad  =
A^2\E \bigl(X^2 \bigr) \bigl\llvert\mathcal{T}(t) \bigr
\rrvert\cdot\int_G \bigl[\psi(y) \bigr]^2 \bigl
\llvert\phi \bigl(yx^{-1} \bigr)- \bar\phi \bigl(yx^{-1}
\bigr) \bigr\rrvert^2 m_G(\d y)
\\
&&\qquad \le\operatorname{const}\cdot\,\llVert\phi-\bar\phi\rrVert_{L^2(G)}^2,\nonumber
\end{eqnarray}
where the implied constant does not depend on $(t,x,\phi,\bar
\phi)$.
The definition of the stochastic convolution shows that
%
%
\begin{equation}
\operatorname{supp} \bigl( ( K\circledast Z)_t \bigr) \subseteq
\operatorname{supp}(\psi)\oplus\operatorname{supp}(\phi),
\end{equation}
almost surely for all $t\ge0$, where ``supp'' denotes ``support.''
Since
$K\circledast Z$ and $\bar{K}\circledast Z$ are both random fields
(step~1),
we can integrate both sides of (\ref{eqprepre})
$[\exp(-2\beta t)\, \d t\times m_G(\d x)]$ in order to find that
%
%
\begin{equation}
\label{eqK-K} \qquad\bigl[ \mathcal{N}_\beta(K\circledast Z - \bar{K}
\circledast Z) \bigr]^2 \le\operatorname{const}\cdot\,\llVert\phi-\bar
\phi\rrVert_{L^2(G)}^2 \cdot m_G \bigl(
\operatorname{supp}(\psi)\oplus S \bigr),
\end{equation}
where $ S $ is any compact set that contains both the supports of
both $\phi$ and $\bar\phi$. Of course, $\operatorname{supp}(\psi
)\oplus S $
has finite $m_G$-measure since it is a compact set.

We now use the preceding computations as follows: let us choose in
place of
$\bar\phi$ a sequence of functions $\phi^1,\phi^2,\ldots,$ all in $L^2(G)$
and all supported in one fixed compact set $ S \supset
\operatorname{supp}(\phi)$, such that: (i) Each $\phi^j$ can be
written as
$\phi^j(x):= \sum_{i=1}^{n_j} a_{i,j}\mathbf{1}_{E_i}(x)$
for some constants $a_{i,j}$'s and compact sets $E_j\subset G$; and (ii)
$\llVert \phi-\phi^j\rrVert _{L^2(G)}\to0$ as $j\to\infty$. The
resulting kernel
can be written as $K^j$ (in place of $\bar K$). Thanks to (\ref{eqK-K}),
%
%
\begin{equation}
\lim_{j\to\infty}\mathcal{N}_\beta \bigl( K\circledast Z -
K^j\circledast Z \bigr)=0.
\end{equation}
A direct computation shows that $K^j\circledast Z$ is an elementary random
field, and hence it is in $\mathcal{P}^2_\beta$. Thanks\vspace*{1.5pt} to the preceding
display, $K\circledast Z$ is also in $\mathcal{P}^2_\beta$. This
completes the proof of
step~2.
\end{longlist}

\begin{longlist}[\textit{Step} 2.]
\item[\textit{Step} 3 {[\textit{proof of} (\ref{eqYoung})]}.]
Since
%
%
\begin{equation}
\label{eqSP} \qquad \E \bigl( \bigl\llvert(K\circledast Z )_t(x) \bigr
\rrvert^2 \bigr) = \int_0^t\d s
\int_G m_G(\d y) \bigl[K_{t-s}
\bigl(yx^{-1} \bigr) \bigr]^2\E \bigl( \bigl\llvert
Z_s(y) \bigr\rrvert^2 \bigr),
\end{equation}
we integrate both sides $[\d m]$ in order to obtain
%
%
\begin{eqnarray}
\E \bigl( \bigl\llVert(K\circledast Z )_t \bigr\rrVert
_{L^2(G)}^2 \bigr) &=& \int_0^t
\llVert K_{t-s}\rrVert_{L^2(G)}^2 \E \bigl( \llVert
Z_s\rrVert_{L^2(G)}^2 \bigr)\, \d s\nonumber
\\
&\le&{\e}^{2\beta t} \bigl[\mathcal{N}_\beta(Z)
\bigr]^2\int_0^t {
\e}^{-2\beta(t-s)}\llVert K_{t-s}\rrVert_{L^2(G)}^2 \,\d s
\\
&\le&{\e}^{2\beta t} \bigl[\mathcal{N}_\beta(Z)
\bigr]^2\llVert K\rrVert_{\mathcal
{L}^2_\beta}^2.\nonumber
\end{eqnarray}
The interchange of integrals and expectation is justified by Tonelli's theorem,
thanks to step~1.
Divide by $\exp(-2\beta t)$ and optimize over $t\ge0$ to deduce
(\ref{eqYoung}) whence the lemma.\quad\qed
\end{longlist}\noqed
\end{pf*}

Now we extend the definition of the stochastic convolution as follows:
suppose $K\in\mathcal{L}^2_\beta$ and $Z\in\mathcal{P}^2_\beta$ for\vspace*{1pt}
some $\beta\ge0$. Then we can find simple random fields~$Z^1,Z^2,\ldots$
such that $\lim_{n\to\infty}\mathcal{N}_\beta(Z^n-Z)=0$.
Lemma~\ref{lemYoung} ensures that
%
%
\begin{equation}
\lim_{n\to\infty}\mathcal{N}_\beta \bigl(K^n
\circledast Z-K\circledast Z \bigr)=0,
\end{equation}
and hence the following result holds.

%
%
\begin{theorem}\label{thSC}
If $K\in\mathcal{L}^2_\beta(G)$ and $Z\in\mathcal{P}^2_\beta(G)$
for some $\beta\ge0$, then there exists
$K\circledast Z\in\mathcal{P}^2_\beta(G)$ such that
$(K,Z)\mapsto K\circledast Z$ is a.s. a bilinear map
that satisfies (\ref{eqYoung}). This stochastic convolution
$K\circledast Z$ agrees with the Walsh stochastic convolution when
$Z$ is a simple random field.
\end{theorem}

The random field $K\circledast Z$ is
the \emph{stochastic convolution} of $K$ and $Z$. Let us emphasize, however,
that this construction of $K\circledast Z$ produces a stochastic process
$t\mapsto(K\circledast Z)_t$ with values in $L^2(G)$.

\section{Proof of Theorem \texorpdfstring{\protect\ref{thexistunique}}{2.1}: Part 1}\label{secproofpart1}

The proof of Theorem~\ref{thexistunique} is divided naturally in two parts:
first, we study the case that $\sigma(0)=0$; after that we visit the
case that
$G$ is compact.
The two cases are handled by different methods.
Throughout this section, we address only the first case, and hence we
assume that
%
%
\begin{equation}
\sigma(0)=0\qquad\mbox{whence } \bigl\llvert\sigma(z) \bigr\rrvert\le \lip
\llvert z\rrvert\qquad\mbox{for all }z\in\R;
\end{equation}
see (\ref{sigma}).

Our derivation follows ideas of Walsh \cite{Walsh} and Dalang \cite
{Dalang}, but
has novel features as well, since our stochastic convolutions are
not defined as classical (everywhere defined) random fields
but rather as elements of the\vspace*{1pt} space $\bigcup_{\beta\ge0}\mathcal
{P}^2_\beta(G)$.
Therefore, we hash out some of the details of the proof of Theorem~\ref
{thexistunique}.
Throughout, we write $u_t(x)$ in place of $u(t,x)$, as is customary in
the theory of stochastic processes. Thus, let us emphasize
that we \emph{never} write
$u_t$ in place of $\partial u/\partial t$.

Let us follow (essentially) the treatment of Walsh \cite{Walsh}, and
say that
a stochastic process $u:=\{u_t\}_{t\ge0}$ with\vspace*{1pt} values in $L^2(G)$ is a
\emph{mild solution} to (\ref{SHE}) with initial function $u_0\in L^2(G)$,
when $u$ satisfies
%
%
\begin{equation}
\label{eqmildsoln} u_t = P_tu_0 + \lambda
\bigl( p\circledast\sigma(u) \bigr)_t \qquad\mbox{a.s. for all }t>0,
\end{equation}
viewed as a random dynamical system on $L^2(G)$.\footnote{In
statements such as
this, we sometimes omit writing ``a.s.,'' particularly when the
``almost sure''
assertion is implied clearly.}
Somewhat more precisely, we wish to find a
$\beta\ge0$, sufficiently large, and solve the preceding as a
stochastic integration equation for processes in $\mathcal{P}^2_\beta(G)$,
using that value of $\beta$. Since the spaces $\{\mathcal{P}^2_\beta
(G)\}_{\beta\ge0}$
are nested, there is no unique choice. But as it turns out there is a minimal
acceptable choice for $\beta$, which we also will identify for later purposes.\vspace*{1pt}

The proof proceeds, as usual, by an appeal to Picard iteration. Let\break
$u^{(0)}_t(x):= u_0(x)$ and define iteratively
%
%
\begin{equation}
\label{equ^n} u^{(n+1)}_t:= P_tu_0+
\lambda \bigl( p\circledast\sigma \bigl(u^{(n)} \bigr)
\bigr)_t,
\end{equation}
for all $n\ge1$. Since
%
%
\begin{equation}
\mathcal{N}_\beta( P_tu_0 ) \le\sup
_{t\ge0} \llVert P_tu_0\rrVert
_{L^2(G)} = \llVert u_0\rrVert_{L^2(G)}\qquad\mbox{for
all }\beta\ge0,
\end{equation}
and because $\llVert p\rrVert _{\mathcal{L}^2_\beta}^2=\Upsilon(2\beta)$,
it follows from Lemma~\ref{lemYoung} that
%
%
\begin{eqnarray}
\qquad \mathcal{N}_\beta \bigl( u^{(n+1)} \bigr) &\le&
\llVert u_0\rrVert_{L^2(G)} + \lambda\mathcal{N}_\beta
\bigl( \sigma\circ u^{(n)} \bigr) \biggl( \int_0^\infty{
\e}^{-2\beta s}\llVert p_s\rrVert_{L^2(G)}^2 \,\d s \biggr)^{1/2}
\nonumber\\[-8pt]\\[-8pt]\nonumber
&=& \llVert u_0\rrVert_{L^2(G)} + \lambda
\mathcal{N}_\beta \bigl( \sigma\circ u^{(n)} \bigr)\sqrt{
\Upsilon(2\beta)},
\end{eqnarray}
for all $n\ge1$ and $\beta\ge0$. Next, we apply the Lipschitz
condition of
$\sigma$ together with the fact that $\sigma(0)=0$ in order to deduce
the iterative bound
%
%
\begin{equation}
\mathcal{N}_\beta \bigl( u^{(n+1)} \bigr) \le\llVert
u_0\rrVert_{L^2(G)} + \mathcal{N}_\beta \bigl(
u^{(n)} \bigr)\lambda\lip\sqrt{\Upsilon(2\beta)}.
\end{equation}
Now we choose $\beta$ somewhat carefully. Let us choose and fix
some $\varepsilon\in(0,1)$, and then define
%
%
\begin{equation}
\label{eqbeta} \beta:= \frac{1}2\Upsilon^{-1} \biggl(
\frac{1}{(1+\varepsilon)^2
\lambda^2\lip^2} \biggr),
\end{equation}
which leads to the identity
$\lambda\lip\sqrt{\Upsilon(2\beta)}= (1+\varepsilon)^{-1}$, whence
%
%
\begin{equation}
\mathcal{N}_\beta \bigl( u^{(n+1)} \bigr) \le\llVert
u_0\rrVert_{L^2(G)} + \frac{1}{(1+\varepsilon)}\mathcal{N}_\beta
\bigl( u^{(n)} \bigr).
\end{equation}
Since $\mathcal{N}_\beta(u_0) = \llVert u_0\rrVert _{L^2(G)}$, it
follows that
%
%
\begin{equation}
\sup_{n\ge0}\mathcal{N}_\beta \bigl( u^{(n)}
\bigr) \le\frac{1+\varepsilon}{\varepsilon} \llVert u_0\rrVert_{L^2(G)}.
\end{equation}
The same value of $\beta$ can be applied in a similar way
in order to deduce that
%
%
\begin{equation}
\mathcal{N}_\beta \bigl( u^{(n+1)}-u^{(n)} \bigr) \le
\frac{1}{1+\varepsilon}\mathcal{N}_\beta \bigl( u^{(n)}-u^{(n-1)}
\bigr).
\end{equation}
This shows, in particular, that $\sum_{n=0}^\infty\mathcal{N}_\beta
(u^{(n+1)}
-u^{(n)})<\infty$, whence there exists $u$ such that
$\lim_{n\to\infty}\mathcal{N}_\beta(u^{(n)}-u)=0$. Since
%
%
\begin{eqnarray}
&& \mathcal{N}_\beta \bigl( p\circledast \bigl[\sigma
\bigl(u^{(n)} \bigr)- \sigma(u ) \bigr] \bigr) \nonumber
\\
&&\qquad \le \lambda
\mathcal{N}_\beta \bigl(\sigma \bigl(u^{(n)} \bigr)- \sigma(u )
\bigr)\cdot \biggl(\int_0^\infty{
\e}^{-2\beta s}\llVert p_s\rrVert_{L^2(G)}^2 \,\d s \biggr)^{1/2}
\\
&&\qquad \le\lambda\lip\cdot\,\mathcal{N}_\beta \bigl( u^{(n)}-u
\bigr) \sqrt{\Upsilon(2\beta)},\nonumber
\end{eqnarray}
it follows that the stochastic convolution $p\circledast\sigma(u^{(n)})$
converges in norm $\mathcal{N}_\beta$ to the stochastic convolution
$p\circledast\sigma(u)$. Thus, it follows that $u$ solves the stochastic
heat equation and the $L^2$ moment bound on $u$ is a consequence of the
fact that
$\mathcal{N}_\beta(u)\le(1+\varepsilon)\varepsilon^{-1}\llVert
u_0\rrVert _{L^2(G)}$,
for the present choice
of $\beta$. The preceding can be unscrambled as follows:
%
%
\begin{equation}
\label{eqNu} \qquad\E \bigl(\llVert u_t\rrVert_{L^2(G)}^2
\bigr) \le\frac{(1+\varepsilon)^2}{\varepsilon^2} \llVert u_0\rrVert _{L^2(G)}^2
\exp \biggl\{\frac{t}2\Upsilon^{-1} \biggl( \frac{1}{(1+\varepsilon)^2\lambda^2\lip^2}
\biggr) \biggr\},
\end{equation}
for all $\varepsilon\in(0,1)$ and $t\ge0$. Of course,
(\ref{eqL2bdd}) is a ready consequence. This proves
the existence of the right sort of mild solution to (\ref{SHE}).

The proof of uniqueness follows the ideas of Dalang \cite{Dalang} but
computes norms
in $L^2(G)$ rather than pointwise norms. To be more specific, suppose
$v$ is another solution that satisfies (\ref{eqL2bdd}) for some finite
constant $c\ge0$. Then of course $v$ satisfies~(\ref{eqL2bdd})
also when $c$ is replaced by any other larger constant. Therefore,
there exists $\beta\ge c\ge0$ such that $u,v\in\mathcal{P}^2_\beta$
(for the same $\beta$). A calculation, very much similar to those
we made earlier for Picard's iteration, shows that
%
%
\begin{equation}
\mathcal{N}_\beta(u-v) \le\lambda\lip\cdot\,\mathcal{N}_\beta
(u-v)\cdot\sqrt{\Upsilon(2\beta)},
\end{equation}
whence it follows that the $L^2(G)$-valued stochastic processes
$\{u_t\}_{t\ge0}$ and $\{v_t\}_{t\ge0}$ are modifications of one another.
This completes the proof.\qed

\section{Proof of Theorem \texorpdfstring{\protect\ref{thexistunique}}{2.1}: Part~2}\label{secproofpart2}
It remains to prove theorem in the case
that $G$ is compact. If, additionally, $\sigma(0)=0$, then the
existence and
uniqueness of a solution follows from the proof of the
noncompact case. That proof states, in an {a priori} sense, that if
$u_0\in L^2(G)$ and
$\sigma(0)=0$, then $u_t\in L^2(G)$ for all $t>0$ as well. This
property is not in general
true. Therefore, we need to proceed otherwise. Our approach is to
reduce the problem
to the case that $u_0\in C_c(G)$, by approximation. Then we show that, in
the case that $u_0\in C_c(G)$,
(\ref{SHE}) has a pointwise (random field) solution that has the property that
%
%
\begin{equation}
C_T:=\sup_{t\in[0,T]}\sup_{x\in G}\E
\bigl( \bigl\llvert u_t(x) \bigr\rrvert^2 \bigr)<\infty
\qquad\mbox{for all }T>0.
\end{equation}
It then follows from Tonelli's theorem that $\sup_{t\in[0,T]}\mathscr
{E}_t(\lambda)
\le C_T<\infty$, since $m_G(G)=1$ in the compact case.

The actual proof requires a number of small technical steps.

Recall the norms $\mathcal{N}_\beta$. We now introduce a slightly
different family of norms that were introduced earlier in Foondun and
Khoshnevisan \cite{FoondunKh}.

%
%
\begin{definition}
For every $\beta\ge0$ and for all everywhere-defined
random fields
$Z:=\{Z_t(x)\}_{t\ge0,x\in G}$, we define
%
%
\begin{equation}
\mathcal{M}_\beta(Z):= \sup_{t\ge0}\sup
_{x\in G} \bigl\{ {\e}^{-2\beta t}\E \bigl( \bigl\llvert
Z_t(x) \bigr\rrvert^2 \bigr) \bigr\}^{1/2}.
\end{equation}
\end{definition}

We\vspace*{-1pt} can define predictable random fields $\mathcal{P}^\infty_\beta(G)$
with respect to the preceding norms,
just as we defined spaces $\mathcal{P}^2_\beta(G)$ of predictable random
fields for $\mathcal{N}_\beta$ in Definition~\ref{defP}.

%
%
\begin{definition}
For every $\beta\ge0$, we define $\mathcal{P}^\infty_\beta(G)$ to
be the
completion of the collection of simple random fields in the norm
$\mathcal{M}_\beta$.
We may observe that: (i)~Every\vspace*{1pt} $\mathcal{P}^\infty_\beta(G)$ is a
Banach space,
once endowed with norm $\mathcal{M}_\beta$; and (ii)~if $\alpha
<\beta$, then
$\mathcal{P}^\infty_\alpha(G)\subseteq\mathcal{P}^\infty_\beta(G)$.
\end{definition}

Note that $\mathcal{M}_\beta$ is a larger norm than $\mathcal
{N}_\beta$
on $\mathcal{P}^\infty_\beta(G)$,
since $G$ is compact. Indeed, because $m_G(G)=1$ it follows that
$\mathcal{N}_\beta(Z)\le\mathcal{M}_\beta(Z)$ for all $Z\in
\mathcal{P}^\infty_\beta(G)$.\vspace*{1pt}

The stochastic convolution $K\circledast Z$ can be
defined for $Z\in\mathcal{P}^\infty_\beta(G)$
as well, just as one does it for $Z\in\mathcal{P}^2_\beta(G)$
(Theorem~\ref{thSC}). The
end result is the following.

%
%
\begin{theorem}\label{thSC2}
If\vspace*{1pt} $K\in\mathcal{L}^2_\beta(G)$ and $Z\in\mathcal{P}^\infty_\beta(G)$
for some $\beta\ge0$, then there exists
$K\circledast Z\in\mathcal{P}^\infty_\beta(G)$ such that
$(K,Z)\mapsto K\circledast Z$ is a.s. a bilinear map
that satisfies the stochastic Young inequality,
%
%
\begin{equation}
\mathcal{M}_\beta(K\circledast Z)\le\mathcal{M}_\beta(Z)
\cdot\llVert K\rrVert_{\mathcal{L}^2_\beta(G)}.
\end{equation}
This stochastic convolution
$K\circledast Z$ agrees with the Walsh stochastic convolution when
$Z$ is a simple random field.
\end{theorem}

The proof of Theorem~\ref{thSC2}
follows the same general pattern of the proof of Theorem~\ref{thSC}
but one has to make a few adjustments that, we feel, are routine. Therefore,
we omit the details. However, we would like to emphasize that this
stochastic convolution
is not always the same as the one that was constructed in the previous sections.
In particular, let us note that if $K\in\mathcal{L}^2_\beta(G)$
and $Z\in\mathcal{P}^\infty_\beta(G)$ for some
$\beta\ge0$, then $(K\circledast Z)_t(x)$ is a well-defined uniquely defined
random variable for all $t>0$ and $x\in G$. This should be compared to
the fact that $(K\circledast Z)_t$ is defined only as an element of
$L^2(G)$ when $Z\in\mathcal{P}^2_\beta(G)$.

The next result shows that
(\ref{SHE}) has a a.s.-unique mild pointwise solution $u$ whenever $u_0\in
L^\infty(G)$,
in the sense that
$u$ is the a.s.-unique solution to the equation
%
%
\begin{equation}
u_t(x) = (P_tu_0) (x) + \bigl(p
\circledast\sigma(u) \bigr)_t(x),
\end{equation}
valid a.s. for every $x\in G$ and $t>0$. The preceding
stochastic convolution is understood to be the one that we just
constructed in this section.
Among other things, the following tacitly ensures that the said
stochastic convolution
is well defined.

%
%
\begin{theorem}\label{thexistuniquebdd}
Let $G$ be an LCA group, and $\{X_t\}_{t\ge0}$ be a L\'evy process on~$G$.
If $u_0\in L^\infty(G)$, then
for every $\lambda>0$, the stochastic heat equation (\ref{SHE})
has a mild pointwise solution $u$ that satisfies the following:
there exists a finite constant $b\ge1$ that yields the energy inequality
%
%
\begin{equation}
\label{eqLinftybdd} \sup_{x\in G}\E \bigl( \bigl\llvert
u_t(x) \bigr\rrvert^2 \bigr) \le b{\e}^{bt}
\qquad\mbox{for every }t\ge0.
\end{equation}
Moreover, if $v$ is any mild solution that satisfies (\ref{eqL2bdd})
as well as $v_0=u_0$,
then $\P\{u_t(x)=v_t(x)\}=1$ for all $t\ge0$ and $x\in G$.
\end{theorem}

One can model a proof of Theorem~\ref{thexistuniquebdd}
after the already-proved portion of Theorem~\ref{thexistunique}
[i.e., in the case that $\sigma(0)=0$], but use the norm $\mathcal
{M}_\beta$
in place of $\mathcal{N}_\beta$.
In fact, such a proof will imply that
(\ref{eqLinftybdd}) has a solution that is in $L^\infty(G)$ at all times
as long as $u_0\in L^\infty(G)$, even if $G$ is not compact and
$\sigma(0)$ is not $0$.
When $G=\R$, the latter facts are also contained within
the theory of Dalang \cite{Dalang}.
For these reasons, we omit the proof of Theorem~\ref
{thexistuniquebdd}. But let us emphasize that
since $u$ is a random field in the sense of the present paper, (\ref
{eqLinftybdd})
and Fubini's theorem together imply that if $u_0\in L^\infty(G)$, then
%
%
\begin{equation}
\E \bigl(\llVert u_t\rrVert_{L^2(G)}^2 \bigr) \le
b{\e}^{bt}m_G(G).
\end{equation}
Now let us recall that for our present purposes $G$ is compact,
and hence \mbox{$m_G(G)=1$}. It follows from these conditions that
the solution $u_t$ is also in $L^2(G)$,
for all $t>0$, as long as $u_0\in L^\infty(G)$.\footnote{This
property can fail
when $G$ is not compact and $\sigma(0)$ is not zero.
For example, if $u_0=0$, $G=\R$, and $\sigma\equiv1$ (the linear
stochastic heat equation),
then there is a unique solution that is in $L^\infty(\R)$ at all
times but there
is no solution that is in $L^2(\R)$ at any time $t>0$.}

Now we begin our proof of Theorem~\ref{thexistunique} in the
case that $G$ is compact, an assumption which we assume for the
remainder of the section.

Our normalization of Haar measure ensures that $m_G(G)=1$ in the
present compact case.
Consequently, $L^\infty(G)\subset L^2(G)$, and hence
if $u_0\in L^\infty(G)$, then (\ref{SHE}) has a random field solution,
with values in $L^2(G)\cap L^\infty(G)$ at all times, such that
%
%
\begin{equation}
\label{eqLinftyL2} \E \bigl(\llVert u_t\rrVert_{L^2(G)}^2
\bigr) \le b{\e}^{bt}.
\end{equation}
We also find, {a priori}, that $u\in\mathcal{P}^2_\beta(G)$ for
all sufficiently large $\beta$.
This proves the theorem when $G$ is compact and $u_0\in L^\infty(G)$.

In fact, we can now use the {a priori}
existence bounds that we just developed in order to argue, somewhat as
in the Walsh theory,
and see that [in this case where $u_0\in L^\infty(G)$]
%
%
\begin{eqnarray}\label{eqisometry}
\E \bigl( \bigl\llvert u_t(x) \bigr\rrvert
^2 \bigr) &=& \bigl\llvert(P_tu_0) (x) \bigr
\rrvert^2
\nonumber\\[-8pt]\\[-8pt]
&&{} + \lambda^2 \int_0^t\d s\int_G m_G(\d y) \bigl[p_{t-s}
\bigl(yx^{-1} \bigr) \bigr]^2\E \bigl( \bigl\llvert\sigma
\bigl(u_s(y) \bigr) \bigr\rrvert^2 \bigr),\nonumber
\end{eqnarray}
for all $t>0$ and $x\in G$.
But we will not need this formula at this time. Instead, let us observe the
following variation: if
$v$ solves (\ref{SHE})---for the same white noise $\xi$---with $v_0\in
L^\infty(G)$,
then
%
%
\begin{eqnarray}
&& \E \bigl( \bigl\llvert u_t(x)-v_t(x) \bigr\rrvert
^2 \bigr)\nonumber
\\
&&\qquad  = \bigl\llvert(P_tu_0) (x) -
(P_tv_0) (x) \bigr\rrvert^2\nonumber
\\
&&\quad\qquad{} + \lambda^2\int_0^t \d s
\int_Gm_G(\d y) \bigl[p_{t-s}
\bigl(yx^{-1} \bigr) \bigr]^2\E \bigl( \bigl\llvert\sigma
\bigl(u_s(y) \bigr)-\sigma \bigl(v_s(y) \bigr) \bigr
\rrvert ^2 \bigr)
\\
&&\qquad \le \bigl\llvert(P_tu_0) (x) - (P_tv_0)
(x) \bigr\rrvert^2\nonumber
\\
&&\quad\qquad{} +\lambda^2\lip^2\cdot\int_0^t
\d s\int_Gm_G( \d y) \bigl[p_{t-s}
\bigl(yx^{-1} \bigr) \bigr]^2\E \bigl( \bigl\llvert
u_s(y)-v_s(y) \bigr\rrvert ^2 \bigr).\nonumber
\end{eqnarray}
Since each $P_t$ is a linear contraction on $L^2(G)$, we may integrate
both sides of the preceding inequality in order
to deduce the following from Fubini's
theorem: for every $\beta\ge0$,
%
%
\begin{eqnarray}
&& \E \bigl( \llVert u_t-v_t\rrVert_{L^2(G)}^2
\bigr)\nonumber
\\
&&\qquad  \le\llVert u_0-v_0\rrVert_{L^2(G)}^2
+ \lambda^2\lip^2\cdot\int_0^t
\llVert p_{t-s}\rrVert_{L^2(G)}^2\E \bigl(\llVert
u_s-v_s\rrVert_{L^2(G)}^2 \bigr)
\\
&&\qquad \le\llVert u_0-v_0\rrVert_{L^2(G)}^2
+ \lambda^2\lip^2{\e}^{2\beta t} \bigl[
\mathcal{N}_\beta(u-v) \bigr]^2\cdot\Upsilon(2\beta).\nonumber
\end{eqnarray}
In particular,
%
%
\begin{equation}
\bigl[\mathcal{N}_\beta(u-v) \bigr]^2 \le\llVert
u_0-v_0\rrVert_{L^2(G)}^2 +
\lambda^2\lip^2 \bigl[\mathcal{N}_\beta(u-v)
\bigr]^2\Upsilon(2\beta).
\end{equation}
Owing to (\ref{eqLinftyL2}), we know that $\mathcal{N}_\beta
(u-v)<\infty$
if $\beta$ is sufficiently large. By the dominated convergence theorem,
$\lim_{\beta\uparrow\infty}\Upsilon(2\beta)=0$, whence we have
%
%
\begin{equation}
\lambda^2\lip^2\Upsilon(2\beta)\le\nicefrac12 \qquad
\mbox{for all }\beta\mbox{ large enough.}
\end{equation}
This shows that
%
%
\begin{equation}
\label{equ-v} \mathcal{N}_\beta(u-v) \le\operatorname{const}\cdot\,\llVert u_0-v_0\rrVert_{L^2(G)},
\end{equation}
for all $u_0,v_0\in L^\infty(G)$
and an implied constant that is finite and depends only on $(\lambda,\lip,\Upsilon)$.

Now that we have proved (\ref{equ-v}), we can complete the proof of
Theorem~\ref{thexistunique} (in the case that $G$ is compact) as follows:
suppose $u_0\in L^2(G)$. Since $C_c(G)$ is dense in $L^2(G)$, we can find
$u_0^{(1)},u_0^{(2)},\ldots\in C_c(G)$ such that $u_0^{(n)}\to u_0$
in $L^2(G)$ as $n\to\infty$. Let
$u^{(n)}:=\{u^{(n)}_t(x)\}_{t\ge0,x\in G}$ denote the solution to (\ref{SHE})
starting at $u^{(n)}_0$.
Equation (\ref{equ-v}) shows that $\{u^{(n)}\}_{n=1}^\infty$ is a
Cauchy sequence
in $\mathcal{P}^2_\beta(G)$ provided that $\beta$ is
chosen to be sufficiently large (but fixed). Therefore, $w:=\lim_{n\to
\infty} u^{(n)}$ exists
in $\mathcal{P}^2_\beta(G)$. Lemma~\ref{lemYoung}\vspace*{2pt} ensures that
$p\circledast u^{(n)}$ converges to $p\circledast w$, and hence
$w$ solves (\ref{SHE}) starting at $u_0$. This proves existence. Uniqueness
is proved
by similar approximation arguments.

\section{Proof of Proposition \texorpdfstring{\protect\ref{prlinear}}{2.3}}\label{secproofprop}
First, consider the case that $u_0\in L^\infty(G)$. In that case,
we may apply (\ref{eqisometry}) in order to see that the solution
$u$ is defined pointwise and satisfies
%
%
\begin{equation}
\E \bigl( \bigl\llvert u_t(x) \bigr\rrvert^2 \bigr) \le
\bigl\llvert(P_tu_0) (x) \bigr\rrvert^2 +
\lambda^2\llVert\sigma\rrVert_{L^\infty(\R)}^2 \int
_0^t\llVert p_s\rrVert
_{L^2(G)}^2 \,\d s.
\end{equation}
Since $\int_0^t\llVert p_s\rrVert _{L^2(G)}^2 \,\d s=\int_0^t\bar
{p}_s({\e}_G)\, \d s
\le{\e}\Upsilon(1/t)<\infty$ [(\ref{eqpUpsilon})] and $G$ is compact,
the $L^2(G)$-contractive
property of $P_t$ yields
%
%
\begin{equation}
\bigl[\mathscr{E}_t(\lambda) \bigr]^2 = \E \bigl(
\llVert u_t\rrVert_{L^2(G)}^2 \bigr) \le\llVert
u_0\rrVert_{L^2(G)}^2 + {\e} \lambda^2
\llVert\sigma\rrVert^2_{L^\infty(\R)} \Upsilon(1/t).
\end{equation}

If $u$ is known to be only in $L^2(G)$, then by density we can
find for every $\varepsilon>0$ a function $v\in L^\infty(G)$
such that $\llVert u_0-v_0\rrVert _{L^2(G)}\le\varepsilon$. The
preceding paragraph and~(\ref{equ-v}) together yield
%
%
\begin{eqnarray}
\bigl[\mathscr{E}_t(\lambda) \bigr]^2 &\le&2{
\e}^{2\beta t} \bigl[\mathcal{N}_\beta(u-v) \bigr]^2 + 2
\bigl(\llVert v_0\rrVert_{L^2(G)}^2 + {\e}
\lambda^2\llVert\sigma\rrVert^2_{L^\infty(\R)}
\Upsilon(1/t) \bigr)\hspace*{-30pt}
\nonumber\\[-8pt]\\[-8pt]\nonumber
&\le&\operatorname{const}\cdot\,2{\e}^{2\beta t}\varepsilon^2 + 2
\bigl(2\llVert u_0\rrVert_{L^2(G)}^2 + 2
\varepsilon^2+ {\e}\lambda^2\llVert\sigma\rrVert
^2_{L^\infty(\R)}\Upsilon(1/t) \bigr).\hspace*{-30pt}
\end{eqnarray}
This is more than enough to show that
$\mathscr{E}_t(\lambda)=O(\lambda)$ for all $t>0$. In fact, it
yields also the quantitative bound,
%
%
\begin{equation}
\mathscr{E}_t(\lambda)\le\operatorname{const} \cdot\, \bigl( \llVert
u_0\rrVert_{L^2(G)} + \lambda\llVert\sigma\rrVert
_{L^\infty(\R)} \sqrt{\Upsilon(1/t)} \bigr),
\end{equation}
for a finite universal constant.
This completes the first portion of the proof.

If $\llvert\sigma\rrvert$ is bounded uniformly from below, then we reduce
the problem to the case that $u_0\in L^\infty(G)$ just as we did in
the first half, using (\ref{equ-v}), and then apply (\ref{eqisometry})
in order to see that [in the case that $u_0\in L^\infty(G)$],
%
%
\begin{equation}
\E \bigl( \bigl\llvert u_t(x) \bigr\rrvert^2 \bigr) \ge
\inf_{z\in G} \bigl\llvert u_0(z) \bigr\rrvert
^2 + \lambda^2 \inf_{z\in\R} \bigl\llvert
\sigma(z) \bigr\rrvert^2\cdot\int_0^t
\llVert p_s\rrVert_{L^2(G)}^2 \,\d s.
\end{equation}
We will skip the remaining details on how one makes the transition from
considerations of initial values $u_0\in L^\infty(G)$
to initial values $u_0\in L^2(G)$: this issue has been dealt with
already in the first
half of the proof. Instead, let us conclude the proof by observing that
the preceding
is consistent, since $\int_0^t\llVert p_s\rrVert _{L^2(G)}^2 \,\d s>0$,
for if this
integral were zero for all $t$ then the proof would fail.
But because $G$ is compact and $m_G$ is a probability measure on $G$,
Jensen's inequality reveals that $\llVert p_s\rrVert _{L^2(G)}^2\ge
\llVert p_s\rrVert _{L^1(G)}^2=1$.
Therefore,\vspace*{1pt} $\int_0^t\llVert p_s\rrVert _{L^2(G)}^2 \,\d s\ge t$ is
positive when
$t$ is
positive, as was advertised.

\section{Condition \texorpdfstring{(\protect\ref{D})}{(D)} and local times}\label{seclocal}
Dalang's condition (\ref{D}) is connected intimately to the theory of local times
for L\'evy processes. This connection was pointed out in Foondun, Khoshnevisan
and Nualart \cite{FKN} when $G=\R$; see also
Eisenbaum et al. \cite{EisenbaumFoondunKh}. Here, we describe how
one can extend that connection to the present, more general, setting
where $G$
is an LCA group.

Let $Y:=\{Y_t\}_{t\ge0}$ be an independent copy of $X$, and consider the
stochastic process
%
%
\begin{equation}
S_t:= X_t Y_t^{-1}\qquad(t
\ge0).
\end{equation}
It is easy to see that $S:=\{S_t\}_{t\ge0}$ is a L\'evy process with
characteristic function
%
%
\begin{equation}
\E(S_t,\chi) = {\e}^{-2t\Re\Psi(\chi)}\qquad\mbox{for all }t\ge0 \mbox{
and }\chi\in G^*,
\end{equation}
where $\Psi$ denote the L\'evy--Khintchine exponent, or characteristic
exponent, of
the L\'evy process $\{X_t\}_{t\ge0}$.
The process $S$ is called the L\'evy symmetrization of $X$; the
nomenclature is motivated by the fact that each $S_t$ is a
\emph{symmetric} random variable in the sense that
$S_t$ and $S_t^{-1}$ have the same distribution for all $t\ge0$.

Let $J$ denote the \emph{weighted occupation measure} of $S$, that is,
%
%
\begin{equation}
J(A):= \int_0^\infty\1_A(S_s){
\e}^{-s} \,\d s,
\end{equation}
for all Borel sets $A\subset G$. It is easy to see that
%
%
\begin{equation}
\hat{J}(\chi):= \int_G (x,\chi) J(\d x) = \int
_0^\infty(S_s,\chi){\e}^{-s}\,
\d s \qquad \bigl(\chi\in G^* \bigr),
\end{equation}
whence
%
%
\begin{equation}
\E \bigl( \bigl\llvert\hat{J}(\chi) \bigr\rrvert^2 \bigr) = 2\int
_0^\infty{\e}^{-t} \,\d t\int
_0^t{\e}^{-s} \,\d s\, \E \bigl[
(S_s,\chi) \overline{(S_t,\chi)} \bigr].
\end{equation}
For every $s,t\ge0$ and for all characters $\chi\in G^*$,
%
%
\begin{equation}
(S_s,\chi) \overline{(S_t,\chi)} =
\chi(S_s) \chi \bigl(S_t^{-1} \bigr) =\chi
\bigl(S_sS_t^{-1} \bigr)= \bigl(
S_sS_t^{-1} \bigr) (\chi).
\end{equation}
Note that $S_sS_t^{-1}=(S_tS_s^{-1})^{-1}$, and that
the distribution of $S_tS_s^{-1}$ is the same as the
distribution of $S_{t-s}$ for $t\ge s\ge0$.
Since $S_{t-s}$ has the same distribution as that of~$S_{t-s}^{-1}$, by
the symmetry of $S$, it follows that
%
%
\begin{eqnarray}
\E \bigl( \bigl\llvert\hat{J}(\chi) \bigr\rrvert^2 \bigr) &=& 2\int
_0^\infty{\e}^{-t} \,\d t\int
_0^t{\e}^{-s} \,\d s\, \E \bigl[
(S_{t-s},\chi) \bigr]\nonumber
\\
&=& 2\int_0^\infty{\e}^{-s} \,\d s\int
_s^\infty{\e}^{-t} \d t\, {
\e}^{-(t-s)\Re\Psi(\chi)}
\\
&=&\frac{1}{1+2\Re\Psi(\chi)},\nonumber
\end{eqnarray}
for every $\chi\in G^*$. Therefore,
%
%
\begin{equation}
\E \bigl( \llVert\hat{J}\rrVert_{L^2(G^*)}^2 \bigr) = \int
_{G^*} \biggl(\frac{1}{1+2\Re\Psi(\chi)} \biggr) m_{G^*}(\d
\chi)= \Upsilon(1).
\end{equation}

In particular, we have proved that Dalang's condition (\ref{D}) is equivalent
to the condition that
%
%
\begin{equation}
\ell(x):= \frac{\d J}{\d m_G}(x)\qquad\mbox{exists and is in }L^2(\P
\times m_G),
\end{equation}
and in this case,
%
%
\begin{equation}
\label{eqellPlancherel} \E \bigl( \llVert\ell\rrVert_{L^2(G)}^2 \bigr)
= \E \bigl( \llVert\hat{J}\rrVert_{L^2(G^*)}^2 \bigr) =
\Upsilon(1),
\end{equation}
thanks to Plancherel's theorem.
For real-valued L\'evy processes, this observation is due
essentially to Hawkes \cite{Hawkes}.

The random field $\ell$ is called the \emph{local times} of $\{S_t\}
_{t\ge0}$;
$\ell$ has, by its very definition, the property that it is a random
probability
function on $G$ such that
%
%
\begin{equation}
\int_G f\ell\,\d m_G = \int
_0^\infty f(S_t){\e}^{-t} \,\d
t\qquad\mbox{a.s.},
\end{equation}
for all nonrandom functions $f\in L^2(G)$.

Let us now return to the following remark that was made in the \hyperref[sec1]{Introduction}.

%
%
\begin{lemma}\label{lemexistG-discrete}
Dalang's condition (\ref{D}) holds whenever $G$ is discrete.
\end{lemma}

This lemma was shown to hold as a consequence of Pontryagin--van Kampen
duality. We can now understand this lemma probabilistically.

\begin{pf*}{A probabilistic proof of Lemma~\ref{lemexistG-discrete}}
When $G$ is discrete, local times always exist and are described via
%
%
\begin{equation}
\ell(x):= \int_0^\infty\1_{\{x\}}(S_t){
\e}^{-t} \,\d t \qquad(x\in G).
\end{equation}
In light of (\ref{eqellPlancherel}),
it remains to check only that $\ell\in L^2(\P\times m_G)$,
since it is evident that $\ell=\d J/\d m_G$ in this case.
But since $m_G$ is the counting measure on $G$,
%
%
\begin{eqnarray}
\Upsilon(1) &=& \llVert\ell\rrVert_{L^2(\P\times m_G)}^2\nonumber
\\
&=& 2\sum_{x\in G} \int_0^\infty{
\e}^{-s} \,\d s\int_s^\infty{
\e}^{-t} \,\d t\, \P\{S_s=x,S_t=x\}
\\
&=&2\int_0^\infty{\e}^{-s} \,\d s\int
_s^\infty{\e}^{-t} \,\d t\, \P
\{S_{t-s}={\e}_G\},\nonumber
\end{eqnarray}
where ${\e}_G$ denotes the identity element in $G$.
Since $\P\{S_{t-s}={\e}_G\}\le1$, it follows readily that
$\Upsilon(1)<\infty$, whence follows condition (\ref{D}).
\end{pf*}

\section{Group invariance of the excitation indices}\label{secgroup}
The principal aim of this section is to prove that
the noise excitation indices $\overline{\mathfrak{e}}(t)$
and $\underline{\mathfrak{e}}(t)$ are ``group invariants.''
In order to do this, we need to apply some care, but it is easy to
describe informally what group invariance means: if we apply
a topological isomorphism to $G$, then we do not change the values of
$\overline{\mathfrak{e}}(t)$ and $\underline{\mathfrak{e}}(t)$.

%
%
\begin{definition}
Recall that two LCA groups $G$ and $\Gamma$ are \emph{isomorphic}
(as topological groups) if there exists a homeomorphic homomorphism
$h\dvtx G\to\Gamma$. We will denote by $\operatorname{Iso}(G,\Gamma)$
the collection
of all such topological isomorphisms, and write ``$G\cong\Gamma$'' when
$\operatorname{Iso}(G,\Gamma)\neq\varnothing$; that is precisely
when $G$ and
$\Gamma$ are isomorphic to one another.
\end{definition}

Throughout this section, we consider two LCA groups $G\cong\Gamma$.

It is easy to see that if $h\in\operatorname{Iso}(G,\Gamma)$, then
$m_\Gamma\circ h$
is a translation-invariant Borel measure on $G$ whose total mass agrees
with the total
mass of $m_G$. Therefore, we can find a constant
$\mu(h)\in(0,\infty)$ such that
%
%
\begin{equation}\label{eq11.1}
m_\Gamma\circ h =\mu(h) m_G \qquad\mbox{for all }h\in
\operatorname{Iso}(G,\Gamma).
\end{equation}

%
%
\begin{definition}\label{defmodulus}
We refer to $\mu\dvtx \operatorname{Iso}(G,\Gamma)\to(0,\infty)$
as the \emph{modulus function}, and $\mu(h)$ as the
\emph{modulus of an isomorphism} $h\in\operatorname{Iso}(G,\Gamma)$.
In particular, we say that $G$ is \emph{unimodular} when $\mu(h)=1$.
\end{definition}

This definition is motivated by the following: since $G\cong G$,
the collection $\operatorname{Aut}(G):=\operatorname{Iso}(G,G)$ of
all automorphisms
of $G$ is never empty. Recall that $\operatorname{Aut}(G)$ is in general
a non-Abelian group endowed with group product $h\circ g$ (composition)
and group inversion $h^{-1}$ (functional inversion). It is then easy to
see that
$\mu$ is a homomorphism from $\operatorname{Aut}(G)$ into the
multiplicative positive reals
$\R_{>0}^\times$; that is, that $\mu(h\circ g)=\mu(h)\mu(g)$ and
$\mu(h^{-1})=1/\mu(h)$ for every $h,g\in\operatorname{Aut}(G)$.
Thus, the Definition
\ref{defmodulus} of a unimodular group agrees with the usual one when
$\Gamma=G$.

The following simple lemma is an immediate consequence of our standard
normalization of Haar measures and states that compact and/or discrete
LCA groups are unimodular. But it is worth recording.

%
%
\begin{lemma}
Every element of $\operatorname{Iso}(G,\Gamma)$
is measure preserving when $G$ is either compact or discrete. In
other words, if $G$ is compact or discrete, then so is~$\Gamma$, and
$\mu(h)=1$ for every $h\in\operatorname{Iso}(G,\Gamma)$.
\end{lemma}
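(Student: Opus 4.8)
The plan is to exploit the defining relation $m_\Gamma\circ h = \mu(h)\,m_G$ by evaluating both sides on a single, carefully chosen Borel set, after first observing that the topological hypothesis on $G$ transfers to $\Gamma$ through $h$. First I would record the purely topological fact that any $h\in\textnormal{Iso}(G\,,\Gamma)$ is in particular a homeomorphism and a bijection; hence $\Gamma=h(G)$ is compact whenever $G$ is compact, and $\Gamma$ is discrete whenever $G$ is discrete, since discreteness is preserved under homeomorphism. In either case $\Gamma$ is of the same topological type as $G$, so the standard normalization of Haar measure recorded in Cases 1 and 2 applies to $m_\Gamma$ exactly as it does to $m_G$. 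This step is what makes the correct normalization available on the target group.

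In the compact case I would test the identity on $A=G$. Because $h$ is a bijection, $h(G)=\Gamma$, so $(m_\Gamma\circ h)(G)=m_\Gamma(\Gamma)$. The standard normalization forces $m_G(G)=m_\Gamma(\Gamma)=1$, and therefore the defining relation reads $1=\mu(h)\cdot 1$, giving $\mu(h)=1$.

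In the discrete case I would instead test on a singleton $A=\{g\}$ for an arbitrary $g\in G$. Under the standard normalization both $m_G$ and $m_\Gamma$ are counting measures, so $m_G(\{g\})=1$ and $m_\Gamma(\{h(g)\})=1$; the defining relation then yields $1=\mu(h)\cdot 1$, again $\mu(h)=1$. Combining the two cases shows that $h$ is measure preserving, i.e.\ $m_\Gamma\circ h=m_G$ and $\mu(h)=1$ for every $h\in\textnormal{Iso}(G\,,\Gamma)$.

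There is no genuine obstacle here; the only points needing (minor) care are ensuring that the topological hypothesis really does transfer to $\Gamma$, so that the correct normalization is in force on the target group, and matching the test set to that normalization---the whole group $G$ in the compact case, where the total mass is pinned to $1$, and a singleton in the discrete case, where each point mass is pinned to $1$. Once the correct normalization holds on both sides, evaluating the defining relation on the chosen set is immediate.
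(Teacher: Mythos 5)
Your proof is correct and is precisely the argument the paper has in mind: the paper omits the proof, calling the lemma ``an immediate consequence of our standard normalization of Haar measures,'' and your two-case evaluation (total mass $1$ on the whole group in the compact case, unit point masses in the discrete case, after noting that compactness and discreteness transfer to $\Gamma$ through the homeomorphism $h$) is exactly that immediate consequence spelled out. No gaps.
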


Next, let $\xi$
denote a space--time white noise on $\R_+\times G$. Given a function
$h\in\operatorname{Iso}(G,\Gamma)$, we may define a random set function
$\xi_h$ on $\Gamma$ as follows:
%
%
\begin{equation}
\xi_h(A\times B):= \sqrt{\mu(h)} \xi \bigl(A\times
h^{-1}(B) \bigr),
\end{equation}
for all Borel sets $A\subset\R_+$ and $B\subset\Gamma$ with finite
respective measures $\operatorname{Leb}(A)$ and $m_G(B)$. In this way,
we find that
$\xi_h$ is a totally scattered Gaussian random measure on $\R_+\times
\Gamma$
with control measure $\operatorname{Leb}\times m_\Gamma$. Moreover,
%
%
\begin{eqnarray}
\E \bigl( \bigl\llvert\xi_h(A\times B) \bigr\rrvert^2
\bigr) &=& \mu(h) \operatorname{Leb}(A) \bigl(m_G\circ
h^{-1} \bigr) (B)
\nonumber\\[-8pt]\\[-8pt]\nonumber
&=& \operatorname{Leb}(A) m_\Gamma(B).
\end{eqnarray}
In other words, we have verified the following simple fact.

%
%
\begin{lemma}\label{lemmuWN}
Let $\xi$ denote a space--time white noise on
$\R_+\times G$. Then $\xi_h$ is a white noise
on $\R_+\times\Gamma$ for every $h\in\operatorname{Iso}(G,\Gamma)$.
\end{lemma}

Note, in particular, that we can solve SPDEs on $(0,\infty)\times
\Gamma$
using the space--time white noise $\xi_h$. We will return to this matter
shortly.

If $f\in L^2(G)$ and $h\in\operatorname{Iso}(G,\Gamma)$,
then $f\circ h^{-1}$ can be defined uniquely as an element of
$L^2(\Gamma)$ as well as pointwise. Here is how: first, let us
consider $f\in C_c(G)$,
in which case $f\circ h^{-1}\dvtx \Gamma\to\R$ is defined pointwise and is
in $C_c(\Gamma)$. Next, we observe that
%
%
\begin{eqnarray}
\label{eqL2compose} \bigl\llVert f\circ h^{-1} \bigr\rrVert
_{L^2(\Gamma)}^2 &=& \int_\Gamma \bigl\llvert f
\bigl(h^{-1}(x) \bigr) \bigr\rrvert^2 m_\Gamma(\d
x)\nonumber
\\
&=& \int_G \bigl\llvert f(y) \bigr\rrvert^2
(m_\Gamma\circ h) (\d y)
\\
&=&\mu(h)\llVert f\rrVert_{L^2(G)}^2.\nonumber
\end{eqnarray}
Since $C_c(G)$ is dense in $L^2(G)$, the preceding constructs
uniquely $f\circ h^{-1}\in L^2(\Gamma)$ for every topological
isomorphism $h\dvtx G\to\Gamma$.
Moreover, it follows that (\ref{eqL2compose}) is valid for all $f\in L^2(G)$.
This construction has a handy consequence which we describe next.

For the sake of notational simplicity, if $Z$ is a random field,
then we write $Z\circ h^{-1}$ for the random field $Z_t(h^{-1}(x))$, whenever
$h$ is such that this definition makes sense. Of course, if $Z$ is nonrandom,
then we may use the very same notation; thus, $K\circ h^{-1}$ makes
sense equally
well in what follows.

%
%
\begin{lemma}\label{lemZcirch}
Let $\beta\ge0$ and $h\in\operatorname{Iso}(G,\Gamma)$.
If $Z\in\mathcal{P}^2_\beta(G)$, then
$Z\circ h^{-1}\in\mathcal{P}^2_\beta(\Gamma)$, where
%
%
\begin{equation}
\bigl( Z\circ h^{-1} \bigr)_t(x):= Z_t
\bigl(h^{-1}(x) \bigr)\qquad\mbox{for all }t>0\mbox{ and }x\in\Gamma.
\end{equation}
Moreover,
%
%
\begin{equation}
\mathcal{N}_\beta \bigl(Z\circ h^{-1};\Gamma \bigr)=\sqrt{
\mu(h)} \mathcal{N}_\beta(Z;G).
\end{equation}
\end{lemma}

\begin{pf}
It suffices to prove the lemma when $Z$ is an elementary random field.
But then the result follows immediately from first principles,
thanks to (\ref{eqL2compose}).
\end{pf}

Our next result is a change of variables formula for Wiener integrals.

%
%
\begin{lemma}\label{lemWIcompose}
If $F\in L^2(\R_+\times\Gamma)$ and $h\in\operatorname{Iso}(G,\Gamma)$,
then
%
%
\begin{equation}
\int_{\R_+\times G} (F\circ h)\, \d\xi= \frac{1}{\sqrt{\mu(h)}} \int
_{\R_+\times\Gamma} F \,\d\xi_h\qquad\mbox{a.s.}
\end{equation}
\end{lemma}

\begin{pf}
Thanks to the very construction of Wiener integrals,
it suffices to prove the lemma in the case that
$F_t(x) = A\mathbf{1}_{[c,d]}(t)\mathbf{1}_Q(x)$
for some $A\in\R$, $0\le c<d$, and Borel-measurable set
$Q\subset\Gamma$ with $m_\Gamma(Q)<\infty$.
In this special case, $(F\circ h)_t(x) = A\mathbf{1}_{[c,d)}(t)\mathbf
{1}_{h^{-1}(Q)}(x)$,
whence we have
%
%
\begin{equation}
\int_{\R_+\times G}(F\circ h)\, \d\xi= A\xi \bigl( [c,d )\times
h^{-1}(Q) \bigr)
\end{equation}
which is $[\mu(h)]^{-1/2}$ times
$A\xi_h ( [c,d)\times
Q )=\int_{\R_+\times\Gamma}F \,\d\xi_h$,
by default.
\end{pf}

%
%
\begin{lemma}\label{lemSCcompose}
Let $\circledast$ denote stochastic convolution with respect to
the white noise $\xi$ on $\R_+\times G$, as before.
For every $h\in\operatorname{Iso}(G,\Gamma)$, let $\circledast_h$
denote stochastic convolution with respect to the white noise $\xi_h$
on $\R_+\times\Gamma$. Choose and fix some $\beta\ge0$.
Then, for all $K\in\mathcal{L}^2_\beta(\Gamma)$ and
$Z\in\mathcal{P}^2_\beta(\Gamma)$,
%
%
\begin{equation}
\label{eqSCcompose} (K\circ h)\circledast(Z\circ h) = \frac{1}{\sqrt
{\mu(h)}} (K
\circledast_h Z )\circ h,
\end{equation}
almost surely.
\end{lemma}

\begin{pf}
Lemma~\ref{lemmuWN} shows that $\xi_h$ is indeed a white
noise on $\R_+\times\Gamma$; and Lemma~\ref{lemZcirch}
guarantees that $Z\circ h\in\mathcal{P}^2_\beta(G)$. In order
for $(K\circ h)\circledast(Z\circ h)$ to be a well-defined stochastic
convolution, we need $K\circ h$ to be in $\mathcal{L}^2_\beta(G)$
(Theorem~\ref{thSC}). But (\ref{eqL2compose}) tells us that
%
%
\begin{equation}
\llVert K_t\circ h\rrVert_{L^2(G)}^2 =
\frac{1}{\mu(h)}\llVert K_t\rrVert_{L^2(\Gamma
)}^2
\qquad\mbox{for all }t>0,
\end{equation}
and hence
%
%
\begin{equation}
\llVert K\circ h\rrVert_{\mathcal{L}^2_\beta(G)}^2 =\frac{1}{\mu(h)}
\llVert K\rrVert_{\mathcal{L}^2_\beta(\Gamma)}^2<\infty.
\end{equation}
This shows that $(K\circ h)\circledast(Z\circ h)$ is a properly-defined
stochastic convolution.

In order to verify (\ref{eqSCcompose}), which is the main content
of the lemma, it suffices to consider the case that $K$ and $Z$ are
both elementary; see Lemma~\ref{lemYoung} and our construction of
stochastic convolutions. In other words, it remains to consider the case
that $K$ and $Z$ have the form described in~(\ref{eqKZ}): that is, in
the present
context: (i) $K_s(y) = A\mathbf{1}_{(c,d]}(s)\phi(y)$ where
$A\in\R$, $0\le c<d$, and $\phi\in C_c(\Gamma)$; and (ii)
$Z_t(x)=X\mathbf{1}_{[a,b)}(t)\psi(x)$ for $0<a<b$, $X\in L^2(\P)$ is
$\F_a$-measurable, and $\psi\in C_c(\Gamma)$. In this case,
\begin{eqnarray}
(K\circ h)_s(y) &=& A\mathbf{1}_{(c,d]}(s)\phi \bigl(h(y)
\bigr),
\nonumber\\[-8pt]\\[-8pt]\nonumber
(Z\circ h)_t(x) &=& X\mathbf{1}_{(a,b]}(t)\psi \bigl(h(x) \bigr).\nonumber
\end{eqnarray}
Therefore,
%
%
\begin{eqnarray}
\qquad\quad&& \bigl[ (K\circ h)\circledast(Z\circ h) \bigr]_t(x)
\nonumber\\[-8pt]\\[-8pt]\nonumber
&&\qquad = AX\int
_{(0,t)\times G}\mathbf{1}_{(c,d]}(s)\mathbf{1}_{(a,b]}(t-s)
\phi \bigl(h \bigl(yx^{-1} \bigr) \bigr)\psi \bigl(h(y) \bigr) \xi(\d s
\,\d y).
\end{eqnarray}
The preceding integral is a Wiener integral, and the above quantity is almost
surely equal to
%
%
\begin{eqnarray}
\qquad&& \frac{AX}{\sqrt{\mu(h)}}\int_{(0,t)\times\Gamma} \mathbf{1}_{(c,d]}(s)
\mathbf{1}_{(a,b]}(t-s) \phi \bigl(y \bigl(h(x) \bigr)^{-1}
\bigr) \psi(y) \xi_h(\d s \,\d y)
\nonumber\\[-8pt]\\[-8pt]\nonumber
&&\qquad  = \frac{1}{\sqrt{\mu(h)}}(K
\circledast_h Z)_t \bigl(h(x) \bigr),
\end{eqnarray}
thanks to Lemma~\ref{lemWIcompose}.
\end{pf}

Finally, if $X:=\{X_t\}_{t\ge0}$ is a L\'evy process on $G$, then
$Y_t:=h(X_t)$ defines a L\'evy process $Y:=h\circ X$ on $\Gamma$.
In order to identify better the process $Y:=h\circ X$, let us first
recall \cite{Morris}, Chapter~4,
that since $\Gamma=h(G)$, every character $\zeta\in\Gamma^*$
is of the form $\chi\circ h^{-1}$ for some $\chi\in G^*$ and vice versa.
In particular, we can understand the dynamics of $Y=h\circ X$ via
the following computation:
%
%
\begin{eqnarray}
\E(\zeta, Y_t) &=& \E \bigl(\chi\circ h^{-1},Y_t
\bigr) = \E \bigl[ \chi \bigl( h^{-1}(Y_t) \bigr) \bigr] =\E \bigl[\chi(X_t) \bigr]
\nonumber\\[-8pt]\\[-8pt]\nonumber
&=& \E(\chi,X_t )= \E(\zeta
\circ h,X_t ),
\end{eqnarray}
for every $t\ge0$ and $\zeta=\chi\circ h^{-1}\in\Gamma^*$.
Let $\Psi_W$ denote the characteristic exponent
of every L\'evy process $W$. Then it follows that
%
%
\begin{equation}
\Psi_{h\circ X}(\zeta) = \Psi_X (\zeta\circ h )\qquad\mbox{for
all }\zeta\in\Gamma^*.
\end{equation}
In particular, we can evaluate the $\Upsilon$-function for $Y:=h\circ
X$ as
follows:
%
%
\begin{equation}
\qquad \int_{\Gamma^*} \biggl(\frac{1}{1+\Re\Psi_{h\circ X}(\zeta)} \biggr)
m_{\Gamma^*}(\d\zeta) =\int_{\Gamma^*} \biggl(
\frac{1}{1+\Re\Psi_X(\zeta\circ h)} \biggr) m_{\Gamma^*}(\d\zeta).
\end{equation}
Since $\zeta\circ h$ is identified with $\chi$ through the
Pontryagin--van Kampen duality pairing, we find the
familiar fact that $\Gamma^*\cong G^*$ \cite{Morris}, Chapter~4,
whence we may deduce the following:
%
%
\begin{eqnarray}
&& \int_{\Gamma^*} \biggl(\frac{1}{1+\Re\Psi_{h\circ X}(\zeta)} \biggr)
m_{\Gamma^*}(\d\zeta)\nonumber
\\
&&\qquad  =\int_{G^*} \biggl(
\frac{1}{1+\Re\Psi_X(\chi)} \biggr) \bigl(m_{\Gamma^*}\circ h^{-1} \bigr) (
\d\chi)
\\
&&\qquad =\mu(h)\cdot\int_{G^*} \biggl(\frac{1}{1+\Re\Psi_X(\chi)} \biggr)
m_{G^*}(\d\chi).\nonumber
\end{eqnarray}
This $\mu(h)$ is the same as the constant in (\ref{eq11.1}), because our
normalization of Haar measures makes the Fourier transform an
$L^2$-isometry.

In other words, we have established the following.

%
%
\begin{lemma}\label{lemDalang2}
Let $X:=\{X_t\}_{t\ge0}$ denote a L\'evy process on $G$,
and choose and fix $h\in\operatorname{Iso}(G,\Gamma)$.
Then the $G$-valued process
$X$ satisfies Dalang's condition (\ref{D}) if and only if
the $\Gamma$-valued process $Y:=h\circ X$
satisfies Dalang's condition (\ref{D}).
\end{lemma}

Let us make another simple computation, this time about the
invariance properties of semigroups and their $L^2$-generators.

%
%
\begin{lemma}\label{lemIsoTD}
Let $X:=\{X_t\}_{t\ge0}$ denote a L\'evy process on $G$,
with semigroup $\{P_t^X\}_{t\ge0}$ and generator $\mathscr{L}^X$,
and choose and fix $h\in\operatorname{Iso}(G,\Gamma)$. Then the
semigroup and generator of $Y:=h\circ X$ are
%
%
\begin{equation}
\bigl(P^{h\circ X}_t f \bigr) (y) = \bigl( P^X_t(f
\circ h) \bigr) \bigl(h^{-1}(y) \bigr)
\end{equation}
and
%
%
\begin{equation}
\bigl( \mathscr{L}^{h\circ X} f \bigr) (y) = \bigl(\mathscr{L}^X
(f\circ h) \bigr) \bigl(h^{-1}(y) \bigr),
\end{equation}
respectively, where $t\ge0$, $y\in\Gamma$, and $f\in L^2(\Gamma)$.
\end{lemma}

\begin{pf}
If $t\ge0$ and $y\in\Gamma$, then
$yh(X_t)=h ( h^{-1}(y)X_t )$, whence it follows that
for all $f\in C_c(\Gamma)$,
%
%
\begin{equation}
\bigl( P^{h\circ X}_tf \bigr) (y) = \E \bigl[ f
\bigl(yh(X_t) \bigr) \bigr]= \E \bigl[(f\circ h) \bigl(
h^{-1}(y)X_t \bigr) \bigr].
\end{equation}
This yields the semigroup of $h\circ X$ by the density of $C_c(G)$
in $L^2(G)$.
Differentiate with respect to $t$ to compute the generator.
\end{pf}

As a ready consequence of Lemma~\ref{lemIsoTD}, we find that if
$X:=\{X_t\}_{t\ge0}$ denotes a L\'evy process on $G$ with transition
densities $p^X$ (with respect to $m_G$), and if $h\in\operatorname
{Iso}(G,\Gamma)$,
then $h\circ X$ is a L\'evy process on $\Gamma$ with transition
densities $p^{h\circ X}$ (with respect to $m_\Gamma$) that are given by
%
%
\begin{equation}
\label{eqphcircX} p^{h\circ X}:= \frac{p^X\circ h^{-1}}{\mu(h)}.
\end{equation}
Indeed, Lemma~\ref{lemIsoTD} and the definition of $\mu(h)$
together imply that
%
%
\begin{equation}
\int\psi p_t^{h\circ X} \,\d m_\Gamma=\E \bigl[\psi
\bigl(h(X_t) \bigr) \bigr],
\end{equation}
for all $t>0$ and $\psi\in C_c(G)$. Therefore, $p^{h\circ X}$ is a
version of
the transition density of $h\circ X$. Lemma~\ref{lemTrDensity}
ensures that $p^{h\circ X}$ is in fact the unique continuous version
of any such transition density.

We are ready to present and prove the main result of this section.
Throughout, $X:=\{X_t\}_{t\ge0}$ denotes a L\'evy process
on $G$ that satisfies Dalang's condition (\ref{D}), and recall our convention
that either $G$ is compact or $\sigma(0)=0$. In this way, we see that
(\ref{SHE}) has a unique solution for every nonrandom initial function
in $L^2(G)$.

%
%
\begin{theorem}[(Group invariance of SPDEs)]\label{thgroupinv}
Suppose $u_0\in L^2(G)$ is nonrandom, and let $u$
denote the solution to (\ref{SHE})---viewed as
an SPDE on $(0,\infty)\times G$---whose
existence and uniqueness is guaranteed\vspace*{1pt} by Theorem
\ref{thexistunique}. Choose and fix $h\in\operatorname{Iso}(G,\Gamma)$.
Then $v_t:= u_t\circ h^{-1}$ defines the unique solution to the stochastic
heat equation
%
%
\begin{equation}
\left|
\begin{array} {l} \displaystyle
\frac{\partial v_t(x)}{\partial t} = \bigl(\mathscr{L}^{h\circ X} v_t \bigr) (x)
+\lambda\sqrt{\mu(h)} \sigma \bigl( v_t(x) \bigr)\xi_h,
\\[8pt]
\displaystyle v_0= u_0\circ h^{-1},
\end{array} %
\right.\label{eqv}
\end{equation}
viewed as an SPDE on $\Gamma=h(G)$, for $x\in\Gamma$ and $t>0$.
\end{theorem}

\begin{pf}
With the groundwork under way, the proof is quite simple.
Let $v$ be the solution to (\ref{eqv}); its existence is guaranteed
thanks to Lemma~\ref{lemDalang2} and Theorem~\ref{thexistunique}.

Let $v^{(n)}$ and $u^{(n)}$, respectively, denote the Picard
iterates of (\ref{eqv}) and $u$. That is, $u^{(n)}$'s are defined iteratively
by (\ref{equ^n}), and $v$'s are defined similarly as
%
%
\begin{equation}
v^{(n+1)}_t:= P_t^{h\circ X}
v_0+ \lambda\sqrt{\mu(h)} \bigl( p^{h\circ X}
\circledast_h \sigma \bigl(v^{(n)} \bigr)
\bigr)_t.
\end{equation}
We first claim that for all $t>0$,
%
%
\begin{equation}
\label{eqinductionstep} v^{(n)}_t=u^{(n)}_t
\circ h^{-1}\qquad\mbox{a.s. for all }n\ge0.
\end{equation}
This is a tautology when $n=0$, by construction. Suppose $v^{(n)}_t
=u^{(n)}_t\circ h^{-1}$ a.s. for every $t>0$, where $n\ge0$ is an
arbitrary fixed integer.
We next verify that $v^{(n+1)}_t=u^{(n+1)}_t\circ h^{-1}$ a.s. for all
$t>0$, as well. This and a relabeling $[n\leftrightarrow n+1$]
will establish (\ref{eqinductionstep}).

Thanks to the induction hypothesis, Lemma~\ref{lemIsoTD}
and (\ref{eqphcircX}),
%
%
\begin{equation}
\qquad v^{(n+1)}_t:= \bigl( P_t^X
u_0 \bigr)\circ h^{-1}+\frac{\lambda}{\sqrt{\mu(h)}} \bigl( \bigl(
p^X\circ h^{-1} \bigr)\circledast_h \sigma
\bigl(u^{(n)}\circ h^{-1} \bigr) \bigr)_t,
\end{equation}
almost surely.
Therefore, Lemma~\ref{lemSCcompose} implies that
%
%
\begin{equation}
v^{(n+1)}_t:= \bigl( P_t^X
u_0 \bigr)\circ h^{-1}+ \lambda \bigl( p^X
\circledast\sigma \bigl(u^{(n)} \bigr) \bigr)_t\circ
h^{-1},
\end{equation}
almost surely. We now merely recognize the right-hand side as
$u^{(n+1)}_t \circ h^{-1}$; see~(\ref{equ^n}). In this way, we have proved
(\ref{eqinductionstep}).\vspace*{2pt}

Since we now know that
$v^{(n)}=u^{(n)}\circ h^{-1}$, two appeals to Theorem~\ref{thexistunique}
(via Lemma~\ref{lemZcirch}) show that if $\beta$ is sufficiently
large, then
$v^{(n)}$ converges in $\mathcal{P}^2_\beta(\Gamma)$
to $v$ and $u^{(n)}\to u$ in $\mathcal{P}^2_\beta(G)$, as $n\to
\infty$.
Thus, it follows from a second application of Lemma~\ref{lemZcirch}
that $v=u\circ h^{-1}$.
\end{pf}

The following is a ready corollary of Theorem~\ref{thgroupinv};
its main content is in the last line where it shows that our
noise excitation indices are ``invariant under group isomorphisms.''

%
%
\begin{corollary}\label{coinv}
In the context of Theorem~\ref{thgroupinv},
we have the following energy identity:
%
%
\begin{equation}
\label{eqinv} \E \bigl(\llVert u_t\rrVert_{L^2(G)}^2
\bigr) = \frac{1}{\mu(h)}\E \bigl( \llVert v_t\rrVert
_{L^2(\Gamma)}^2 \bigr),
\end{equation}
valid for all $t\ge0$. In particular, $u$ and $v$ have the same
noise excitation indices.
\end{corollary}

\begin{pf}
Since $v_t(x)=u_t(h^{-1}(x))$, it follows from Theorem~\ref{thgroupinv}
and (\ref{eqL2compose}) that
%
%
\begin{equation}
\llVert u_t\rrVert_{L^2(G)}^2 =
\frac{1}{\mu(h)} \llVert v_t\rrVert_{L^2(\Gamma)}^2
\qquad\mbox{a.s.},
\end{equation}
which is more than enough to imply (\ref{eqinv}). The upper noise-excitation
index of $u$ at time $t\ge0$ is
%
%
\begin{equation}
\label{eqUBeu} \overline{\mathfrak{e}}(t)= \limsup_{\lambda\uparrow
\infty}
\frac{1}{\log\lambda}\log\log\sqrt{\E \bigl(\llVert u_t\rrVert
_{L^2(G)}^2 \bigr)},
\end{equation}
whereas the upper noise excitation index of $v$ at time $t$ is
%
%
\begin{equation}
\label{eqUBev} \limsup_{\lambda\uparrow\infty}\frac{1}{\log[
\lambda\sqrt{\mu(h)} ]
}\log\log\sqrt
{\E \bigl(\llVert v_t\rrVert_{L^2(\Gamma)}^2
\bigr)},
\end{equation}
which is equal to $\overline{\mathfrak{e}}(t)$,
thanks to (\ref{eqinv}) and the fact that $\log[
\lambda\sqrt{\mu(h)}]
\sim\log\lambda$ as $\lambda\uparrow\infty$. This proves that the
upper excitation indices of $u$ and $v$ are the same. The very same proof
shows also that the lower excitation indices are shared as well.
\end{pf}

\section{Projections}\label{secprojections}

Consider our stochastic heat equation (\ref{SHE})
in the case that the underlying LCA group $G$ is noncompact, metrizable and
has more than one element; that is, consider the general setting of
Theorem~\ref{thconnected1}.
According to the structure theory of LCA groups, which we will recall
in due time,
we can write $G\cong\R^n\times K$ for a nonnegative integer
$n$ and a compact LCA group $K$. It is easy to see that
the underlying L\'evy process on $G$ can then be written---coordinatewise---as
$X\times Y:=\{X_t\times Y_t\}_{t\ge0}$,
where $\{X_t\}_{t\ge0}$ is a L\'evy process on $\R^n$
and $\{Y_t\}_{t\ge0}$ a L\'evy process on $K$.
The results of this section will allow us
to compare the energy of our stochastic PDE to the energy of another
version of (\ref{SHE}), whose
$x$-variable now ranges in $\R^n$, and whose operator $\mathscr{L}$
is the generator of
$\{X_t\}_{t\ge0}$. This comparison principle is a kind of parallel to
the classical
energy inequality of potential theory. In the present setting, it
states that the energy of
(\ref{SHE}) on $G\cong\R^n\times K$---using the L\'evy process $X\times
Y$---is greater than
or equal to the energy of (\ref{SHE}) on $\R^n$---using the L\'evy process
$X$. Moreoever,
if (\ref{SHE}) has a solution---that is, if $X\times Y$ satisfies Dalang's
condition (\ref{D})---then
(\ref{SHE}) on $\R^n$ must have a solution---that is, $X$ must satisfy Dalang's
condition (\ref{D})---and hence $n=1$.
The structure theory of L\'evy processes on $\R$ will then show us
that the lowest
energy we can expect is from the case that $X$ is Brownian motion. In
that case,
a simple scaling argument can yield the desired $\exp\{\operatorname
{const}\cdot\,\lambda^4\}$
lower bound, which will ultimately verify Theorem~\ref{thconnected1}.

In this section, we study the natural projection $G$ of a (larger) LCA
group $G\times K$,
where $K$ is a compact Abelian group. It is easy to see from first
principles that
such a projection maps a L\'evy process on $G\times K$ to a L\'evy
process on~$G$.
One of the main results of this section is
that if the original process on $G$ satisfied Dalang's condition
(\ref{D})---on $G\times K$---%
then the new process on $G$ will satisfy condition (\ref{D}) on $G$. Thanks
to the structure theory of LCA groups,
this fact and its ensuing ``energy inequality'' will be instrumental in
the proof of
Theorem~\ref{thconnected1} (see Section~\ref{secproofs}).

We will prove Theorem~\ref{thconnected1} in Section~\ref{secproofs}.
Presently, we satisfy ourselves by stating and proving
a general form of the mentioned projection
theorem/energy inequality.

Throughout this section, we let
$G$ denote an LCA group and $K$ a compact Abelian group.
Then it is well known, and easy to see directly, that
$G\times K$ is an LCA group with dual group $(G\times K)^*=G^*\times K^*$
\cite{Morris}, Chapter~4. (For purposes of comparison, let us state
that the $G\times K$
of this section is going to play the role of $G\cong\R^n\times K$ of
the preceding
paragraphs.)

Let $\pi\dvtx G\times K\to G$ denote the canonical projection map.
Since $\pi$ is a (continuous) group homomorphism, it follows that if
$X:=\{X_t\}_{t\ge0}$ is a L\'evy process on $G\times K$,
then $(\pi\circ X)_t:= \pi(X_t)$ defines a L\'evy process on $G$.
If $\chi\in G^*$, then $\chi\circ\pi\in(G\times K)^*$, and hence
%
%
\begin{equation}
\E \bigl(\chi,\pi(X_t) \bigr) =\E \bigl[ ( \chi\circ\pi,
X_t ) \bigr] = {\e}^{-t\Psi_X(\chi\circ\pi)},
\end{equation}
for all $t\ge0$ and $\chi\in G^*$. In other words, we can write the
characteristic
exponent of $\pi\circ X$ in terms of the characteristic exponent of $X$
as follows:
%
%
\begin{equation}
\Psi_{\pi\circ X}(\chi) = \Psi_X(\chi\circ\pi)\qquad\mbox{for all
} \chi\in G^*.
\end{equation}

%
%
\begin{proposition}\label{prproj1}
If $X$ satisfies Dalang's condition (\ref{D}) on $G\times K$, then the L\'evy process
$\pi\circ X$ satisfies condition (\ref{D}) on $G$. In fact, we have
the following ``energy inequality'':
%
%
\begin{equation}
\Upsilon_{\pi\circ X}(\beta) \le\Upsilon_X(\beta)\qquad\mbox{for
all }\beta\ge0,
\end{equation}
where $\Upsilon_W$ is the function defined in~(\ref{eqUpsilon})
and/or (\ref{eqUpsilon1}) for every L\'evy process $W$ that has
transition densities.
\end{proposition}

\begin{pf}
First of all, note that the product measure $m_G\times m_K$
is a translation-invariant Borel measure on $G\times K$, whence
$m_{G\times K}=cm_G\times m_K$ for some constant~$c$.
It is easy to see that $c\in(0,\infty)$; let us argue next that $c=1$.
If $f\in L^2(G)$ and $g\in L^2(K)$ satisfy
$m_G\{f>0\}>0$ and $m_K\{g>0\}>0$, then $(f\otimes g)(x\times y):=
f(x)g(y)$ satisfies $f\otimes g\in L^2(G\times K)$, and
%
%
\begin{equation}
\llVert f\otimes g\rrVert_{L^2(G\times K)}=\llVert f\rrVert_{L^2(G)}
\llVert g\rrVert_{L^2(K)} =\llVert f\otimes g\rrVert_{L^2(m_G\times m_K)}.
\end{equation}
Since the left-most term is equal to $c$ times the right-most term, it
follows that
$c=1$.

Let $p^W$ denote the transition densities of $W$ for every L\'evy process
$W$ that possesses transition densities.
It is a simple fact about ``marginal probability densities'' that since
$X$ has nice transition densities $p^X$ (see Lemma~\ref{lemTrDensity}),
so does $\pi\circ X$. In fact, because $m_{G\times K}
=m_G\times m_K$---as was proved in the previous paragraph---we may
deduce that
%
%
\begin{equation}
p^{\pi\circ X}_t(x) = \int_K
p^X_t(x\times y) m_K(\d y)\qquad\mbox{for
all }t>0\mbox{ and }x\in G.
\end{equation}
Now we simply compute: because $K$ is compact, $m_K$ is a probability measure,
and hence
%
%
\begin{eqnarray}
\bigl\llVert p_t^X \bigr\rrVert_{L^2(G\times K)}^2
&=& \int_G m_G(\d x)\int_K
m_K(\d y) \bigl\llvert p_t^X(x\times y)
\bigr\rrvert^2
\\
\nonumber
&\ge&\int_G m_G(\d x) \biggl\llvert
\int_K m_K(\d y) p_t^X(x
\times y) \biggr\rrvert^2
\\
&=& \bigl\llVert p^{\pi\circ X}_t \bigr\rrVert_{L^2(G)}^2,
\end{eqnarray}
for all $t>0$, owing to the Cauchy--Schwarz inequality.
We can integrate both sides of the preceding $[\exp(-\beta t)\, \d t]$ in
order to see that
%
%
\begin{equation}
\int_0^\infty{\e}^{-\beta s} \bigl\llVert
p_s^{\pi\circ X} \bigr\rrVert_{L^2(G)}^2 \,\d s
\le\int_0^\infty{\e}^{-\beta s} \bigl\llVert
p_s^X \bigr\rrVert_{L^2(G\times K)}^2 \,\d s,
\end{equation}
for all $\beta\ge0$, and the result follows.
\end{pf}

\section{An abstract lower bound}\label{seclowerbound}

The main result of this section is an abstract lower estimate for
the energy of the solution in terms of the function $\Upsilon$
that was defined in~(\ref{eqUpsilon}); see also (\ref{eqUpsilon1}).

%
%
\begin{proposition}\label{prL2LB}
If $u_0\in L^2(G)$, $\llVert u_0\rrVert _{L^2(G)}>0$, and (\ref{eqLsigma})
holds, then
there exists a finite constant $c\ge1$ such that
%
%
\begin{equation}
\mathscr{E}_t(\lambda) \ge c^{-1}\exp(-ct)\cdot\sqrt
{ 1+\sum_{j=1}^\infty \biggl(
\frac{\ell_\sigma^2\lambda^2}{{\e}} \cdot\Upsilon(j/t) \biggr)^j},
\end{equation}
for all $t\ge0$. The constant $c$ depends on $u_0$ as well as the
underlying L\'evy process~$X$.
\end{proposition}

\begin{pf}
Consider first the case that
%
%
\begin{equation}
\label{equ0LinftyL2} u_0\in L^\infty(G)\cap L^2(G).
\end{equation}
Thanks to (\ref{equ0LinftyL2}), we may apply (\ref{eqisometry});
upon integration [$m_G(\d x)]$, this and Fubini's theorem
together yield the following formula:
%
%
\begin{eqnarray}\label{eqL2always}
\qquad \E \bigl(\llVert u_t\rrVert_{L^2(G)}^2 \bigr)&=&
\llVert P_tu_0\rrVert_{L^2(G)}^2 +
\lambda^2\int_0^t \llVert
p_{t-s}\rrVert_{L^2(G)}^2 \E \bigl(\llVert\sigma
\circ u_s\rrVert_{L^2(G)}^2 \bigr)\, \d s\nonumber
\\
&\ge&\llVert P_tu_0\rrVert_{L^2(G)}^2
+ \ell_\sigma^2\lambda^2\int
_0^t \llVert p_{t-s}\rrVert
_{L^2(G)}^2 \E \bigl(\llVert u_s\rrVert
_{L^2(G)}^2 \bigr)\, \d s
\\
&=& \llVert P_tu_0\rrVert_{L^2(G)}^2
+ \ell_\sigma^2\lambda^2\int
_0^t \bar{p}_{t-s}({
\e}_G) \E \bigl(\llVert u_s\rrVert_{L^2(G)}^2
\bigr)\, \d s.\nonumber
\end{eqnarray}
Appeals to Fubini's theorem are indeed justified, since Theorem~\ref
{thexistunique}
contains implicitly the desired measurability statements about $u$.

Next we prove that (\ref{eqL2always}) holds for every $u_0\in L^2(G)$
and not just those that satisfy (\ref{equ0LinftyL2}). With\vspace*{1pt} this aim
in mind,
let us appeal to density in order to
find $u_0^{(1)},u_0^{(2)},\ldots\in L^\infty(G)\cap L^2(G)$ such that
%
%
\begin{equation}
\lim_{n\to\infty} \bigl\llVert u^{(n)}_0-u_0
\bigr\rrVert_{L^2(G)}=0.
\end{equation}
Then (\ref{equ-v}) assures us
that there exists $\beta>0$, sufficiently large, such that
%
%
\begin{equation}
\label{equ-v1} \lim_{n\to\infty}\mathcal{N}_\beta \bigl(
u^{(n)}-u \bigr)=0,
\end{equation}
where $u^{(n)}_t(x)$ denotes the solution to (\ref{SHE}) with initial value
$u^{(n)}_0$. Equation (\ref{equ-v1}) implies readily that
%
%
\begin{equation}
\lim_{n\to\infty}\E \bigl( \bigl\llVert u^{(n)}_t
\bigr\rrVert_{L^2(G)}^2 \bigr) =\E \bigl(\llVert
u_t\rrVert_{L^2(G)}^2 \bigr)\qquad\mbox{for all }
t\ge0.
\end{equation}
And because $P_t$ is contractive on $L^2(G)$,
%
%
\begin{equation}
\lim_{n\to\infty} \bigl\llVert P_tu^{(n)}_0
\bigr\rrVert_{L^2(G)}= \llVert P_tu_0\rrVert
_{L^2(G)}\qquad\mbox{for all }t\ge0.
\end{equation}
Therefore, our claim that (\ref{eqL2always}) holds is verified once
we show that
%
%
\begin{equation}
\label{eqgoal} \lim_{n\to\infty}\int_0^t
\bar{p}_{t-s}({\e}_G) \E \bigl( \bigl\llVert
u^{(n)}_s - u_s \bigr\rrVert
_{L^2(G)}^2 \bigr)\, \d s=0
\end{equation}
for every $t>0$.
This is so because of (\ref{equ-v1}) and
the fact that the preceding integral is bounded above by
%
%
\begin{eqnarray}
&& \bigl[ \mathcal{N}_\beta \bigl(u^{(n)}-u \bigr)
\bigr]^2\cdot\int_0^t{
\e}^{-2\beta(t-s)}\bar{p}_{t-s}({\e}_G)\, \d s
\nonumber\\[-8pt]\\[-8pt]
&&\qquad \le \bigl[
\mathcal{N}_\beta \bigl(u^{(n)}-u \bigr) \bigr]^2
\cdot\Upsilon(2\beta);\nonumber
\end{eqnarray}
see also (\ref{eqUpsilon1}). Thus, we have established (\ref{eqL2always})
in all cases of interest. We can now proceed to prove the main part of the
proposition.

Let us define, for all $t>0$,\footnote{It is easy to write
$\mathcal{E}$ in terms of the energy of
the solution. Indeed, $\mathcal{E}(t)=[\mathscr{E}_t(\lambda)]^2$.}
%
%
\begin{eqnarray}
\mathcal{P}(t) &:=& \ell_\sigma^2\lambda^2
\bar{p}_t({\e}_G),\qquad \mathcal{I}(t):= \llVert
P_tu_0\rrVert_{L^2(G)}^2,
\nonumber\\[-8pt]\\[-8pt]
\mathcal{E}(t) &:=& \E \bigl(\llVert u_t\rrVert_{L^2(G)}^2\bigr).\nonumber
\end{eqnarray}
Thanks to (\ref{eqL2always}),
we obtain the pointwise convolution inequality
%
%
\begin{eqnarray}\label{E}
\mathcal{E} &\ge&\mathcal{I} + (\mathcal{P}*\mathcal{E})\nonumber
\\
&\ge&\mathcal{I} + (\mathcal{P}*\mathcal{I}) + (\mathcal{P}*\mathcal{P}*\mathcal{E})
\nonumber\\[-8pt]\\[-8pt]\nonumber
& \vdots&
\\
&\ge&\mathcal{I} + (\mathcal{P}*\mathcal{I}) +(\mathcal{P}*\mathcal{P}*
\mathcal{I}) + (\mathcal{P}*\mathcal{P}*\mathcal{P}*\mathcal{I})+\cdots,\nonumber
\end{eqnarray}
where $(\psi*\phi)(t):=\int_0^t\psi(s)\phi(t-s)\, \d s$
defines the usual (temporal) convolution operator ``$*$.'' In
particular, we may note
that the final quantity in~(\ref{E}) depends only on the function
$\mathcal{I}$, which is
related only to the initial function $u_0$.

A direct computation shows us that
the Fourier transform of $P_tu_0$, evaluated at $\chi\in G^*$, is
$\exp\{-t\Psi(\chi^{-1})\}\hat{u}_0(\chi)$; see (\ref{eqinversion}).
Therefore, we may apply the Plancherel's theorem to see that
%
%
\begin{equation}
\mathcal{I}(t) = \int_{G^*}{\e}^{-2t\Re\Psi(\chi)} \bigl\llvert
\hat{u}_0(\chi) \bigr\rrvert^2 m_{G^*}( \d\chi)
\qquad\mbox{for all }t>0.
\end{equation}
Since $u_0\in L^2(G)$, we can find a compact neighborhood $K$
of the identity of $G^*$ such that
%
%
\begin{equation}
\qquad\int_K \bigl\llvert\hat{u}_0(\chi) \bigr
\rrvert^2 m_{G^*}(\d\chi)\ge\frac{1}2 \int
_{G^*} \bigl\llvert\hat{u}_0(\chi) \bigr\rrvert
^2 m_{G^*}(\d\chi)=\frac{1}2\llVert
u_0\rrVert_{L^2(G)}^2,
\end{equation}
thanks to Plancherel's theorem (as well as the monotone convergence
theorem, of course). In this way, we find that
%
%
\begin{equation}
\label{eqILB} \mathcal{I}(t) \ge\frac{\llVert u_0\rrVert
_{L^2(G)}^2}{2} {\e}^{-c_0t} \qquad
\mbox{for all }t>0,
\end{equation}
where
%
%
\begin{equation}
c_0:= 2\sup_{\chi\in K}\Re\Psi(\chi).
\end{equation}
We will require the fact that $0\le c_0<\infty$; this fact holds simply
because $\Psi$ is continuous and $\Re\Psi$ is nonnegative.
In this way, (\ref{eqILB}) yields an estimate for
the first term on the right-hand side of (\ref{E}).

As for the other terms, let us write $\mathcal{P}^{*(n)}$ in place
of the $n$-fold convolution, $\mathcal{P}*\cdots*\mathcal{P}$,
where $\mathcal{P}^{*(1)}:=\mathcal{P}$. Then it is easy to deduce
from (\ref{eqILB}) that
%
%
\begin{equation}
\qquad \bigl( \mathcal{P}^{*(n)}*\mathcal{I} \bigr) (t) \ge\frac{\llVert
u_0\rrVert _{L^2(G)}^2}{2}{
\e}^{-c_0t} \bigl(\mathcal{P}^{*(n)}*\1 \bigr) (t)\qquad \mbox{for
all } t>0,
\end{equation}
where $\1(t):=1$ for all $t>0$. Thus, we conclude from (\ref{E}) that
%
%
\begin{equation}
\label{RE} \mathcal{E}(t) \ge\frac{\llVert u_0\rrVert
_{L^2(G)}^2}{2}{\e}^{-c_0t} \cdot
\sum_{n=0}^\infty \bigl(\mathcal{P}^{*(n)}*
\1 \bigr) (t),
\end{equation}
where $\mathcal{P}^{*(0)}*\1:=1$.

Now,
%
%
\begin{equation}
(\mathcal{P}*\1) (t) = \ell_\sigma^2\lambda^2
\cdot\int_0^t\bar{p}_s({
\e}_G)\, \d s.
\end{equation}
Consequently,
%
%
\begin{eqnarray}
\quad\qquad&& (\mathcal{P}*\mathcal{P}*\1) (t)\nonumber
\\
&&\qquad = \ell_\sigma^4
\lambda^4 \cdot\int_0^t
\bar{p}_{s_2}({\e}_G)\, \d s_2\int
_0^{t-s_2} \bar{p}_{s_1}({
\e}_G)\, \d s_1,\nonumber
\\
&& (\mathcal{P}*\mathcal{P}*\mathcal{P}*\1) (t)
\\
&&\qquad =\ell_\sigma^8
\lambda^8\cdot\int_0^t
\bar{p}_{s_3}({\e}_G)\, \d s_3\int
_0^{t-s_3} \bar{p}_{s_2}({
\e}_G)\, \d s_2 \int_0^{t-s_3-s_2}
\bar{p}_{s_1}({\e}_G)\, \d s_1,\nonumber
\\
&&\hspace*{25pt} \vdots.\nonumber
\end{eqnarray}
For all real $t\ge0$ and integers $n\ge1$,
%
%
\begin{eqnarray}
\bigl( \mathcal{P}^{*(n)}*\1 \bigr) (t) &\ge&\ell_\sigma
^{2n}\lambda^{2n} \biggl(\int_0^{t/n}
\bar{p}_s({\e}_G)\, \d s \biggr)^n
\nonumber\\[-8pt]\\[-8pt]\nonumber
&\ge& \biggl(\frac{\ell_\sigma^2\lambda^2}{{\e}} \cdot\Upsilon(n/t) \biggr)^n.
\end{eqnarray}
The first bound follows from an application of induction
to the variable $n$, and the second follows
from (\ref{eqpUpsilon}). Since $(\mathcal{P}^{*(0)}*\1)(t)=1$,
the proposition follows from~(\ref{RE}).
\end{pf}

\section{Proofs of the main theorems}\label{secproofs}

We have set in place all but one essential ingredients of our proofs. The
remaining part is the following simple real-variable result. We prove the
result in detail, since we will need the following quantitative
form of the ensuing estimates.

%
%
\begin{lemma}\label{lemsums}
For all integers $a\geq0$ and real numbers $\rho>0$,
there exists a positive and finite constant $c_{a,\rho}>1$
such that
%
%
\begin{equation}
\sum_{j=a}^\infty \biggl(\frac{b}{j^\rho}
\biggr)^j \ge c^{-1}_{a,\rho}\exp \bigl( (\rho/{\e})
b^{1/\rho} \bigr)\qquad\mbox{for all }b\ge c_{a,\rho}.
\end{equation}
\end{lemma}

\begin{pf}
It is an elementary fact that
$(j/{\e})^j\le j!$ for every integer $j\ge1$. Therefore,
whenever $n$, $m$ and $jm/n$ are positive integers,
%
%
\begin{equation}
\biggl( \frac{jm}{{\e}n} \biggr)^{jm/n} \le \biggl( \frac{jm}{n}
\biggr)!.
\end{equation}
In particular, for all $b>0$,
%
%
\begin{equation}
\sum_{j=a}^\infty \biggl( \frac{b}{j^{m/n}}
\biggr)^j \ge\mathop{\sum_{j\ge a}}_{jm\in n\mathbf{Z}_+}
\frac{b^j (m/{\e}n)^{jm/n}}{
(jm/n)!} \ge\mathop{\sum_{k\ge am/n}}_{k\in\mathbf{Z}_+}
\frac{c^k}{k!},
\end{equation}
where $c:= b^{n/m}m/({\e}n)$. Since
%
%
\begin{equation}
\mathop{\sum_{k<am/n}}_{k\in\mathbf{Z}_+}
\frac{c^k}{k!} \le\max \bigl(b^a,1 \bigr) \sum
_{k=0}^\infty\frac{(m/ {\e}n)^k}{k!} = \exp \biggl\{
\frac{m}{{\e}n} \biggr\}\cdot\max \bigl(b^a,1 \bigr),
\end{equation}
we immediately obtain the inequality
%
%
\begin{eqnarray}
\sum_{j=a}^\infty \biggl( \frac{b}{j^{m/n}}
\biggr)^j &\ge& {\e}^c - \exp \biggl\{ \frac{m}{{\e}n}
\biggr\}\cdot\max \bigl(b^a,1 \bigr)
\nonumber\\[-8pt]\\[-8pt]\nonumber
&=& \exp \biggl\{
\frac{b^{n/m} m}{{\e}n} \biggr\}- \exp \biggl\{ \frac
{m}{{\e}n} \biggr\} \cdot\max
\bigl(b^a,1 \bigr).
\end{eqnarray}
The preceding bound is valid for all integers $n$ and $m$ that are
strictly positive. We can choose now a sequence $n_k$ and $m_k$
of positive integers such that $\lim_{k\to\infty}(m_k/n_k)=\rho$.
Apply the preceding with $(m,n)$ replaced by $(m_k,n_k)$ and
then let $k\to\infty$ to deduce the following bound:
%
%
\begin{equation}
\sum_{j=a}^\infty \biggl(\frac{b}{j^\rho}
\biggr)^j \ge\exp \bigl( (\rho/{\e}) b^{1/\rho} \bigr) - \exp (
\rho/{\e} )\cdot\max \bigl(b^a,1 \bigr).
\end{equation}
Since the preceding is valid for all $b>0$, the lemma follows readily.
\end{pf}

With the preceding under way, we conclude the paper by proving
Theorems~\ref{thdiscrete},~\ref{thconnected1} and~\ref{thconnected2} in this
order.

\begin{pf*}{Proof of Theorem~\ref{thdiscrete}}
We plan to appeal to (\ref{eqNu}) in order to verify the stated
energy upper bound.

Since $\Re\Psi$ is nonnegative,
%
%
\begin{equation}
\Upsilon(\beta) \le\beta^{-1}\qquad\mbox{for all }\beta>0,
\end{equation}
and hence for every $\varepsilon\in(0,1)$,
%
%
\begin{equation}
\Upsilon^{-1} \biggl( \frac{1}{(1+\varepsilon)^2\lambda^2\lip
^2} \biggr) \le
\operatorname{const}\cdot\,\lambda^2\qquad\mbox{for all }\lambda>1,
\end{equation}
where the implied constant is independent of $\lambda$.
Now we merely apply (\ref{eqNu})
in order to see that there exist finite constants $a$ and $b$
such that $\mathscr{E}_t(\lambda)\le a\exp(b\lambda^2)$ for all
$\lambda>1$.
This proves that $\overline{\mathfrak{e}}(t)\le2$.

For the converse bound, we recall that $m_{G^*}$ has total mass
one because $G^*$ is compact. Since $\Psi$ is continuous, it follows that
$\Re\Psi$ is bounded uniformly on $G^*$, and hence for all $\beta_0>0$
there exists a positive constant such that
%
%
\begin{eqnarray}
\qquad \Upsilon(\beta) &=&\int_{G^*} \biggl(\frac{1}{\beta+\Re\Psi(\chi
)} \biggr)
m_{G^*}(\d\chi) \ge\frac{\operatorname{const}}{\beta} \qquad\mbox{for all }\beta>
\beta_0.
\end{eqnarray}
Proposition~\ref{prL2LB} then ensures that
%
%
\begin{equation}
\mathscr{E}_t(\lambda)\ge\operatorname{const} \cdot\,\sqrt{1
+ \sum_{j=1}^\infty \biggl(\frac{t\ell_\sigma^2\lambda^2}{{\e}j}
\biggr)^j} \ge a\exp \bigl(b\lambda^2 \bigr),
\end{equation}
for some finite $a$ and $b$ that depend only on $t$, and
in particular are independent of $\lambda>c_{1,1}$.
(We have appealed to Lemma~\ref{lemsums}---with
$\rho:=1$ and $a:=1$---in order to see that $c_{1,1}$ is strictly
greater than one;
we have also used the assumption that $\ell_\sigma>0$.)
This proves that $\underline{\mathfrak{e}}(t)\ge2$ when
$\ell_\sigma>0$,
and completes our proof of Theorem~\ref{thdiscrete}.
\end{pf*}

\begin{pf*}{Proof of Theorem~\ref{thconnected1}}
First, we consider the case that $G$ is noncompact.

According to the structure theory of LCA groups (\cite{Morris}, Chapter~6),
since $G$ is connected we can find an integer $n\ge0$ and a compact
Abelian group $K$ such that
%
%
\begin{equation}
G\cong\R^n\times K.
\end{equation}
Because $G$ is not compact, we must have $n\ge1$. Now we put
forth the following claim:
%
%
\begin{equation}
\label{eqClaim} n=1.
\end{equation}

In order to prove (\ref{eqClaim}), let $\pi$
denote the canonical projection from $G\cong\R^n\times K$ to $\R^n$.
Because condition (\ref{D}) holds for the L\'evy process $X$ on $G\cong\R
^n\times K$,
Proposition~\ref{prproj1} assures us that the L\'evy process
$\pi\circ X$ on $\R^n$ also satisfies condition (\ref{D}).
That is, $\Upsilon_{\pi\circ X}(\beta)<\infty$ for one, hence all,
$\beta>0$. Recall from (\ref{eqUpsilon1}) that
%
%
\begin{equation}
\qquad \Upsilon_{\pi\circ X}(\beta) = \operatorname{const}\cdot\int
_{\R^n} \biggl(\frac{1}{\beta+\Re\Psi_{\pi\circ X}(z)} \biggr)\, \d z\qquad\mbox{for
all }\beta>0,
\end{equation}
where ``const'' accounts for a suitable normalization of Haar measure on
$\R^n$. Since $\pi\circ X$ is a L\'evy process on $\R^n$,
a theorem of Bochner (\cite{Bochner}, see (3.4.14) on page~67)
ensures that there exists $A\in(0,\infty)$
such that
%
%
\begin{equation}
\label{eqBochner} \Re\Psi_{\pi\circ X}(z) \le A \bigl(1+\llVert z\rrVert
^2 \bigr)\qquad\mbox{for all }z\in\R^n.
\end{equation}
Because $\Upsilon_{\pi\circ X}(\beta)<\infty$, by assumption, it follows
that $\int_{\R^n}(\beta+\llVert z\rrVert ^2)^{-1} \,\d z<\infty$ and hence
$n=1$.\footnote{This illustrates, in the present setting,
the well-known folklore fact that the SHE does not have a
mild solution as a function on $\R^n$ when $n\geq2$; see Dalang \cite{Dalang}
and Peszat and Zabczyk \cite{PZ}.}
This proves our earlier assertion (\ref{eqClaim}).

Now that we have (\ref{eqClaim}), we know that $G\cong\R\times K$
for a compact Abelian group $K$. Because of Theorem~\ref{thgroupinv},
we may assume, without loss of generality, that our LCA group $G$
is in fact equal to $\mathbf{R}\times K$.
Thus, thanks to Propositions~\ref{prproj1} and~\ref{prL2LB},
%
%
\begin{eqnarray}
\E \bigl(\llVert u_t\rrVert_{L^2(\R\times K)}^2 \bigr) &
\ge&\operatorname{const}\cdot\, \Biggl\{ 1+\sum_{j=1}^\infty
\biggl( \frac{\ell_\sigma^2\lambda
^2}{{\e}} \cdot\Upsilon_X(j/t)
\biggr)^j \Biggr\}
\nonumber\\[-8pt]\\[-8pt]\nonumber
&\ge&\operatorname{const}\cdot\, \Biggl\{ 1+\sum_{j=1}^\infty
\biggl( \frac{\ell_\sigma^2\lambda
^2}{{\e}} \cdot\Upsilon_{\pi\circ X}(j/t)
\biggr)^j \Biggr\}.
\end{eqnarray}
According to Bochner's estimate (\ref{eqBochner}),
%
%
\begin{equation}
\Upsilon_{\pi\circ X}(\beta) \ge\operatorname{const}\cdot\int
_0^\infty\frac{\d x}{\beta+x^2}\ge\frac{\operatorname{const}}{\sqrt
\beta},
\end{equation}
uniformly for all $\beta\ge\beta_0$, for every fixed $\beta_0>0$.
Thus, we may appeal to Lemma~\ref{lemsums}---with $\rho:=\nicefrac12$
and $a=1$---in
order to see that $\E(\llVert u_t\rrVert _{L^2(\R\times K)}^2)\ge
a\exp(b\lambda^4)$,
simultaneously for all $\lambda>c_{1,1/2}$, where $c_{1,1/2}$
is a finite constant that is independent of $\lambda$.
This proves that $\underline{\mathfrak{e}}(t)
\ge4$ when $G$ is noncompact (as well as connected).

We complete the proof of the theorem by proving it when $G$ is compact,
connected, metrizable and has at least 2 elements.

A theorem of Pontryagin (\cite{Morris}, Theorem 33, page~106)
states that if $G$ is a locally connected
LCA group that is also metrizable then
%
%
\begin{equation}
G\cong\R^n\times\mathbf{T}^m\times D,
\end{equation}
where $0\le n<\infty$, $0\le m\le\infty$, and $D$ is discrete. Of course,
$\mathbf{T}^\infty:=\mathbf{T}\times\mathbf{T}\times\cdots$
denotes the countable direct product of the torus
$\mathbf{T}$ with itself, as is customary.

Since $G$ is compact and connected, we can deduce readily that
$n=0$ and $D$ is trivial; that is, $G\cong\mathbf{T}^m$ for some
$0\le m\le\infty$. Because, in addition, $G$ contains at least
2 elements, we can see that $m\neq0$; thus,
%
%
\begin{equation}
G\cong\mathbf{T}^m\qquad\mbox{for some }1\le m\le\infty.
\end{equation}
As a matter of fact, the forthcoming argument can be refined to prove
that $m=1$; see our earlier proof of (\ref{eqClaim}) for a model of
such a proof. But since we will not need this fact, we will not
prove explicitly that $m=1$. Suffice it to say that, since $m\ge1$,
an application of Tychonoff's theorem yields
%
%
\begin{equation}
G\cong\mathbf{T}\times K,
\end{equation}
for a \emph{compact} Hausdorff Abelian group $K$.
Theorem~\ref{thgroupinv} reduces our problem
to the case that $G=\mathbf{T}\times K$, owing to projection.

Let now $\pi$ denote the canonical projection from
$\mathbf{T}\times K$ to $\mathbf{T}$,
and argue as in the noncompact case to see that
%
%
\begin{equation}
\E \bigl(\llVert u_t\rrVert_{L^2(\mathbf{T}\times K)}^2 \bigr)
\ge\operatorname{const}\cdot\, \Biggl\{ 1+\sum_{j=1}^\infty
\biggl( \frac{\ell_\sigma^2\lambda
^2}{{\e}} \cdot\Upsilon_{\pi\circ X}(j/t)
\biggr)^j \Biggr\}.
\end{equation}
Bochner's estimate (\ref{eqBochner}) has the following analogue for
the L\'evy process $\pi\circ X$ on $\mathbf{T}$: there exists $A\in
(0,\infty)$
such that
%
%
\begin{equation}
\Re\Psi_{\pi\circ X}(n)\le A \bigl(1+n^2 \bigr)\qquad\mbox{for all
}n\in\mathbf{Z}.
\end{equation}
[The proof of this bound is essentially the same as the proof
of (\ref{eqBochner}).] Since the dual to $\mathbf{T}$
is $\mathbf{Z}$, it follows that
%
%
\begin{equation}
\Upsilon_{\pi\circ X}(\beta) =\operatorname{const}\cdot\,\sum
_{n=-\infty}^\infty\frac{1}{\beta+\Re\Psi(n)} \ge\frac{\operatorname
{const}}{\sqrt
\beta},
\end{equation}
uniformly for all $\beta\ge\beta_0$, for every fixed $\beta_0>0$. A final
appeal to Lemma~\ref{lemsums}---with $\rho:=\nicefrac12$---completes
the proof.
\end{pf*}

\begin{pf*}{Proof of Theorem~\ref{thconnected2}}
Consider the special case that $G=\R$ and $X$ is a symmetric
stable L\'evy process with index $\alpha\in(0,2]$; that is,
$\Psi(\xi)=\llvert\xi\rrvert^\alpha$. Condition (\ref{D}) holds if and only if
$\alpha\in(1,2]$, a condition which we now assume. The generator
of $X$ is the fractional Laplacian $\mathscr{L}:=-(-\Delta)^{\alpha
/2}$ on $\R$.
A direct computation reveals that
%
%
\begin{equation}
\Upsilon(\beta) = \operatorname{const}\cdot\int_0^\infty
\frac{\d
x}{\beta+x^\alpha} =\operatorname{const}\cdot\,\beta^{-(\alpha-1)/\alpha}.
\end{equation}
In particular, for every $\varepsilon\in(0,1)$,
%
%
\begin{equation}
\Upsilon^{-1} \biggl( \frac{1}{(1+\varepsilon)^2\lambda^2\lip
^2} \biggr) \le
\operatorname{const}\cdot\,\lambda^{2\alpha/(\alpha-1)}\qquad\mbox{for all }\lambda>1.
\end{equation}
This yields
%
%
\begin{equation}
\overline{\mathfrak{e}}(t) \le\frac{2\alpha}{\alpha-1},
\end{equation}
in this case; see the proof of the first portion of Theorem~\ref{thdiscrete}
for more details. And an appeal to Lemma~\ref{lemsums} yields
%
%
\begin{equation}
\underline{\mathfrak{e}}(t)\ge\frac{2\alpha}{\alpha-1}.
\end{equation}
See the proof of Theorem~\ref{thconnected1} for some details.

Thus, for every $\alpha\in(1,2]$, we have found a model whose
noise excitation index~is
%
%
\begin{equation}
\label{this} \mathfrak{e} = \frac{2\alpha}{\alpha-1}.
\end{equation}
Since $\theta:=2\alpha/(\alpha-1)$ can take any value in $[4,\infty)$,
as $\alpha$ varies in $(1,2]$,
equation~(\ref{this}) proves the theorem.
\end{pf*}

\section*{Acknowledgements}
We have benefitted a great deal from discussions with Professors Daniel
Ciubotaru
and Dragan Milicic, whom we thank heartily. Many thanks are due to
Professor Martin Hairer,
whose insightful remark about the exponential martingale led to parts
of the
present work. The paper has benefitted greatly from four detailed
reports by
anonymous referees and Associate Editors. We thank them all.




%

\printaddresses
\end{document}